\documentclass[11pt]{article}

\usepackage{amsmath,amssymb,amsfonts}
\usepackage{algorithm}
\usepackage{algorithmic}
\usepackage{cite}
\usepackage{comment}
\usepackage{graphicx}
\usepackage{textcomp}
\usepackage{xcolor}
\usepackage{subfigure}
\usepackage{bm}
\usepackage{amsthm}
\usepackage{booktabs}
\usepackage{url}
\usepackage[authoryear, square]{natbib}
\usepackage[margin=1in]{geometry}
\usepackage{tabularx}
\usepackage{hyperref}

\theoremstyle{definition}
\newtheorem{theorem}{\textbf{Theorem}}

\newtheorem{proposition}{\textbf{Proposition}}
\newtheorem{remark}{\textbf{Remark}}

\newtheorem{condition}{\textbf{Condition}}

\newtheorem{corollary}{\textbf{Corollary}}

\begin{document}

\title{Voluntary Renewable Programs: Optimal Pricing and Revenue Allocation}
\author{
    Zhiyuan Fan\thanks{Department of Earth and Environmental Engineering, Columbia University, USA. Correspondence. Email: zf2198@columbia.edu} \and
    Tianyi Lin\thanks{Department of Industrial Engineering and Operations Research, Columbia University, USA.  Email: tl3335@columbia.edu} \and
    Bolun Xu\thanks{Department of Earth and Environmental Engineering, Columbia University, USA. Email: bx2177@columbia.edu}
}
\date{\today}

\maketitle

\begin{abstract}
This paper develops a multi-period optimization framework to design a voluntary renewable program (VRP) for an electric utility company, aiming to maximize total renewable energy deployments. In the business model of VRP, the utility must ensure it generates renewable energy up to the total amount of contract during each market episode (i.e., a year), while all the revenue collected from the VRP must either be used to invest in procuring renewable capacities or to maintain the current renewable fleet and infrastructure. We thus formulate the problem as an optimal pricing problem coupled with revenue allocation and renewable deployment decisions. We model the demand function of voluntary renewable contracts as an exponential decay function based on survey data.
We analytically derive the optimal pricing policy of the VRP as a function of the current grid carbon intensity. We prove that a myopic policy is conditionally optimal, which maximizes renewable capacity in each period, attains the long-run optimum due to the utility's revenue-neutral constraint. We show different binding conditions and marginal values of decision variables correspond to different phases of the energy transition, and that the utility should strategically design its revenue-sharing decisions, balancing investments in renewable expansion and subsidizing existing renewable fleets.
Finally, we show that voluntary renewable programs can only extend renewable penetration but cannot achieve net-zero emissions or a fully renewable grid. This pricing–allocation–expansion framework highlights both the potential and limitations of voluntary renewable demand, providing analytical insight into optimal policy design and the qualitative shifts occurring during the energy transition process.
\end{abstract}

\section{Introduction}

The decarbonization of electricity systems requires the large-scale deployment of renewable generation such as wind and solar. Rapid cost declines have made these technologies increasingly competitive with conventional fossil-fuel generation, enabling substantial commercial deployment in many regions~\cite{irena2023cost}. At the same time, integrating high shares of renewables presents both operational and economic challenges for power systems. Because renewable resources are inherently variable and intermittent, maintaining system reliability requires complementary investments in flexibility resources such as energy storage, flexible gas turbines, and advanced grid management~\cite{denholm2015duck}. 

In addition to these operational challenges, renewable generation interacts in complex ways with the design of competitive electricity markets. Renewable generators typically have near-zero marginal operating costs~\cite{borenstein_private_2012,joskow_challenges_2019}, while wholesale electricity markets commonly rely on marginal pricing mechanisms in which all generators receive the market-clearing price determined by the marginal unit. As renewable penetration increases, this merit-order effect tends to suppress wholesale electricity prices during periods of high renewable output, reducing the market value of renewable generation~\cite{hirth2013market,brown2018price}. Consequently, even when renewable technologies are cost-competitive on a levelized basis, the revenue they earn from electricity markets can decline as their deployment expands. This dynamic implies that competitive electricity markets alone may not generate sufficient investment incentives to sustain large-scale renewable expansion.

\subsection{Voluntary Demand for Renewable Electricity}
In parallel with these market limitations, voluntary demand for renewable electricity has grown rapidly in recent years and is increasingly emerging as an important driver of renewable deployment. Large corporations—including technology firms such as Google, Apple, and Microsoft—have committed to sourcing renewable electricity for their operations through long-term procurement contracts, while manufacturers and other commercial consumers increasingly purchase renewable energy to reduce their carbon footprints~\cite{oshaughnessy2021corporate,trivella2023corporate}. At the same time, many utilities now offer voluntary renewable electricity programs that allow customers to purchase electricity backed by renewable generation at a price premium. Table~\ref{tab:green_power_markup} summarizes representative examples of such programs in the United States, showing that full renewable electricity plans typically impose a price premium ranging from approximately 0.75 to 2.2 cents per kilowatt-hour.

In practice, voluntary renewable demand is implemented through several institutional mechanisms.
Renewable energy certificates (RECs) a widely used mechanisms for monetizing the environmental attributes of renewable electricity. A REC represents a tradable claim on the renewable generation associated with one unit of electricity, allowing buyers to support renewable production independently of the underlying electricity supply~\cite{morthorst_development_2000}. Because RECs can be traded separately from electricity, they provide an additional revenue stream for renewable generators and create a market-based instrument for supporting renewable deployment. In compliance markets, REC prices are disciplined by regulatory obligations such as renewable portfolio standards, and a substantial literature studies their price dynamics and interactions with electricity markets~\citep{jensen_interactions_2002,tanaka_market_2013,shrivats_meanfield_2022,hulshof_performance_2019}. However, voluntary REC markets often operate with thin liquidity, bilateral trading, and limited transparency, which can weaken price discovery and reduce the credibility of the resulting price signals for renewable investment~\cite{hulshof_performance_2019,frei_liquidity_2018,wimmers_european_2023}.

Long-term power purchase agreements (PPAs) represent a second major mechanism through which renewable attributes are monetized. Under a PPA, large electricity consumers contract directly with renewable generators through long-term agreements that provide stable revenue streams and support project financing. Corporate PPAs have become an important channel for voluntary renewable procurement, particularly for large technology firms and other energy-intensive industries~\citep{trivella_meeting_2023}. While PPA can improve revenue stability and facilitate renewable investment, they also bundle electricity supply, financial risk allocation, and environmental attributes into a single contract. This bundling can obscure the standalone value of renewable attributes and limit transparent price formation for renewable credits~\cite{backstrom_corporate_2024,ghiassi-farrokhfal_making_2021}. 

Another important limitation of corporate renewable procurement through PPAs is that the contract price primarily reflects the generation cost of renewable electricity but does not incorporate the broader system costs associated with delivering and integrating that electricity into the power grid. In particular, the costs of transmission expansion, distribution upgrades, and system balancing resources—such as energy storage, flexible generation, and reserve capacity—are typically borne elsewhere in the electricity system rather than reflected in the PPA price. Empirical market data suggest that recent corporate renewable PPAs in North America have prices on the order of \$50--70/MWh, corresponding to roughly \$0.05--0.07/kWh for utility-scale wind and solar contracts~\cite{levelten2024ppa}. Because these prices are often close to prevailing wholesale electricity prices in competitive power markets, the effective ``green premium'' for corporate renewable procurement is frequently small and may even be negative in regions with abundant renewable resources~\cite{oshaughnessy2021corporate}. While this price competitiveness has facilitated rapid growth in corporate renewable procurement, it also highlights that PPA prices capture only a portion of the true economic cost of renewable electricity supply. As renewable penetration increases, the omitted costs of delivery, integration, and system balancing become increasingly significant, suggesting that PPA-based procurement alone may not provide a sustainable framework for coordinating large-scale renewable deployment.

As a result, both REC markets and PPA-based procurement may generate fragmented or inconsistent price signals for renewable investment~\cite{ranson_linkage_2016}. 

\subsection{Voluntary Renewable Programs}
Voluntary Renewable Programs (VRPs) represent an alternative mechanism that more directly connects renewable procurement with the underlying costs of electricity delivery and system integration. Under VRPs, utilities offer customers the option to purchase electricity backed by renewable generation at a specified price premium. Unlike RECs and PPAs—which are typically traded in wholesale certificate markets or negotiated through bilateral corporate procurement contracts—VRPs operate at the retail level and aggregate consumer willingness to pay through the utility. Because utilities remain responsible for electricity delivery, retail pricing, and investments in the distribution infrastructure required to accommodate growing renewable penetration, they are uniquely positioned to incorporate both generation costs and system-level integration costs into renewable pricing decisions~\cite{pollitt_lessons_2012}.

\begin{table}[htbp]
\centering
\caption{Survey of voluntary renewable electricity programs and customer price premiums.}
\label{tab:green_power_markup}

\renewcommand{\arraystretch}{1.35}

\begin{tabular}{p{4.5cm} c p{6.5cm}}
\hline
Provider / Program & Premium (USD/kWh) & Notes \\
\hline

Austin Energy \newline
GreenChoice \newline
(Residential) \newline \cite{austin_greenchoice}
& 0.0075 
& Simple fixed premium added to the retail electricity rate for customers selecting a 100\% renewable supply option. \\

Dominion Energy \newline
Green Power \newline
(100\% Option) \newline \cite{dominion_greenpower}
& 0.012 
& Example program charge equivalent to \$12 per month for a household consuming 1000 kWh. \\

Dominion Energy \newline
Rider TRG \newline
(100\% Renewable) \newline \cite{dominion_trg}
& 0.0136 
& Premium component embedded within a tariff rider structure rather than a standalone surcharge. \\

Georgia Power \newline
Simple Solar \newline \cite{georgia_simple_solar}
& 0.0125 
& Solar REC matching program where customers pay a fixed premium to support renewable generation. \\

Xcel Energy \newline
Renewable Connect Flex \newline \cite{xcel_renewable_connect}
& 0.015 
& Subscription-based renewable procurement program priced at \$1.50 per 100 kWh block of consumption. \\

Portland General Electric \newline
Green Future Choice\newline  \cite{pge_greenfuture}
& 0.0094$^{a}$ 
& Premium applies to the voluntary renewable portion of electricity consumption beyond the renewable share already included in the default utility mix. \\

Natick Community Electricity \newline
(Community Choice Program,
Eversource service territory) \newline
\cite{natick_community_electricity}
& 0.0226
& 100\% renewable option priced at 16.32¢/kWh compared with 14.06¢/kWh for the standard renewable plan, implying a premium of about 2.26¢/kWh. \\


\hline
\end{tabular}

\vspace{0.3cm}
\begin{flushleft}
{\footnotesize
$^{a}$ The Green Future Choice program purchases renewable energy certificates to match the non-mandated portion of customer electricity consumption.

}
\end{flushleft}

\end{table}

VRPs are becoming an increasingly important channel for financing renewable and low-carbon energy outside formal compliance regimes. Recent studies document both their rapid growth and their persistent weaknesses in credibility, price formation, and procurement design~\citep{wetterberg_interplay_2024,kreibich_caught_2021,bjorn_renewable_2022}. These challenges are especially salient because VRP demand now arises through a widening range of procurement channels, from large corporate power purchase agreements~\citep{trivella_meeting_2023,taheri_physical_2025} to community choice aggregation programs that enable collective participation in green power markets~\citep{oshaughnessy_empowered_2019}. As the generation mix shifts toward higher shares of renewable energy, utilities must finance distribution reinforcement, grid modernization, advanced control technologies, and local storage or flexibility resources needed to maintain system reliability~\cite{das_overview_2018}. Existing renewable procurement mechanisms such as RECs and PPAs generally do not provide a transparent framework for recovering these system-level costs~\cite{jenkins_improved_2017}. By contrast, a centralized VRP mechanism administered by utilities can translate voluntary consumer demand into renewable procurement decisions while simultaneously accounting for the full economic cost of renewable electricity supply. In this way, VRPs have the potential to provide scalable, system-wide coverage that aligns consumer willingness to pay with both renewable generation expansion and the infrastructure investments required to sustain a high-renewable power system.

Another key advantage of VRP compared to REC and PPA is that it provides a standardized product and translates consumer demand for clean electricity into credible price signals and sustained renewable deployment. Table \ref{tab:green_power_markup} summarizes a survey of utility voluntary renewable electricity programs and their associated customer price premiums, showing that a full renewable electricity plan imposes a price premium ranging from 0.75 cents to 2.2 cents per kilowatt hour. The variation in such pricing premiums can be grounded in factors including geographical dependent renewable abundance, existing renewable capacity, revenue allocation, and program performance. 

\subsection{Research Gap and Paper Contribution}
A growing empirical literature examines voluntary demand for renewable electricity and consumers' willingness to pay for green power products. Early survey-based studies document positive willingness to pay for renewable electricity among residential consumers~\cite{farhar_willingness_1999}, while subsequent meta-analyses and experimental studies estimate the magnitude and determinants of these premiums across different markets~\cite{sundt_consumers_2014,andor_equity_2018}. More recent work also examines voluntary renewable procurement by corporations and other large electricity consumers as an emerging driver of renewable deployment~\cite{oshaughnessy2021corporate,trivella2023corporate}. 


Yet, the exact systematic interaction among these factors in the pricing decision remains poorly understood.  In particular, we lack analytical frameworks that jointly characterize how voluntary demand translates into renewable credit pricing, how the resulting revenues are allocated within the electricity system, and how these mechanisms influence long-run renewable capacity expansion. This gap is particularly important in utility-operated renewable programs, where pricing decisions must simultaneously reflect consumer willingness to pay, renewable generation costs, and the system-level costs of integrating intermittent resources. This underscores the need for an analytical framework that jointly characterizes renewable-credit pricing for the program, revenue allocation, and capacity expansion within a realistic utility setting. 

Building on these trends, this paper develops a theoretical framework for pricing voluntary renewable programs and allocating the resulting revenue between renewable expansion and system integration costs. We formulate a multi-period optimization model in which a utility sets renewable program prices and allocates revenues to support renewable deployment while satisfying financial feasibility constraints. Our analysis yields closed-form characterizations of optimal pricing policies and revenue-sharing rules, and shows that a simple myopic expansion policy can achieve the long-run optimum. We further demonstrate that VRPs can substantially expand renewable penetration but, by themselves, cannot deliver full decarbonization, highlighting the need for complementary policy instruments. Together, these results provide analytical insights into how voluntary renewable demand can be translated into credible price signals and effective renewable deployment in realistic utility settings.

\section{Model and Problem Formulation}

\subsection{Framework}

We adopt the perspective of an electric power utility that offers VRP options to its customers and uses the resulting program revenue to support investment in renewable generation. We consider a benevolent utility whose objective is to maximize renewable capacity expansion while maintaining strict financial feasibility. Let the initial renewable capacity in the utility’s generation portfolio be denoted by \(Q_0\).

The planning horizon is a finite discrete set of time periods
\(
\mathcal{T}=\{1,2,\dots,n\},
\)
where each period represents one decision stage (e.g., a financial year). Notation used throughout the formulation and in subsequent figures is summarized in Table~\ref{table:notations}.

We first present a baseline formulation in which all financial flows are evaluated on a \emph{single integrated balance sheet}. This formulation corresponds to a vertically integrated benchmark in which the utility internalizes both system-level costs and generation-level net costs. The purpose of this benchmark is to isolate the fundamental pricing and expansion incentives of the program. In Section~\ref{sec:revenue_sharing}, we will relax this assumption and explicitly reintroduce revenue allocation across utility and independent power producers.

We start with formulating the optimal VRP pricing problem as a multi-period optimization problem.
In each period \(t\), given the current renewable capacity \(Q_t\), the utility chooses the VRP price \(p_t\) and capacity addition \(q_t\). Define the per-period VRP revenue based on the demand function \(D\):
\[
R(p_t,Q_t)\;:=\;p_t\,D\!\left(\frac{p_t}{e(Q_t)}\right).
\]
The term \(p_t / e(Q_t)\) converts the electricity premium into an effective abatement price. Since \(p_t\) is measured in \$/kWh and \(e(Q_t)\) in ton-CO\(_2\)/kWh, the ratio has units of \$/ton-CO\(_2\), reflecting that VRP demand is driven by the cost of emissions reduction provided by renewable procurement. Further details on the demand specification are provided in Section~2.2.  

All other costs borne by the vertically integrated utility, excluding renewable investment, are summarized by the \emph{non-investment} per-period cost term
\[
C(Q_t)\;:=\;C_S(Q_t)+C_R(Q_t)-f(Q_t)\pi(Q_t),
\]
so that the total per-period financial requirement is given by \(C(Q_t)+k_t q_t\), where \(k_t\) is the unit expansion cost in period \(t\). The underlying components are summarized in Table~\ref{table:notations}.

The multi-period optimization problem is:
\begin{subequations}\label{eq:opt_problem_multi}
\begin{align}
    \max_{\{p_t,\, q_t\}_{t\in\mathcal T}} \quad 
    & \sum_{t \in \mathcal{T}} q_t \label{eq:opt_problem_multi_obj} \\[4pt]
    \text{s.t.}\quad 
    & D\!\left(\frac{p_t}{e(Q_t)}\right) \le f(Q_t), 
    \qquad &&\forall t \in \mathcal T \label{eq:opt_problem_multi_a}\\[2pt]
    & C(Q_t)+k_t q_t \le \;p_t\,D\!\left(\frac{p_t}{e(Q_t)}\right),
    \qquad &&\forall t \in \mathcal T \label{eq:opt_problem_multi_b}\\[2pt]
    & Q_{t+1} = Q_t + q_t, \qquad Q_1 \text{ given (or } Q_0\text{)}, \qquad 
    &&\forall t \in \mathcal T \setminus \{\max\mathcal T\} \label{eq:opt_problem_multi_c}\\[2pt]
    & q_t \ge 0,\quad p_t \ge 0, \qquad &&\forall t \in \mathcal T. \label{eq:opt_problem_multi_d}
\end{align}
\end{subequations}

The utility channels revenue from VRP customers into additional renewable deployment. Accordingly, its fundamental objective~\eqref{eq:opt_problem_multi_obj} is to maximize cumulative renewable capacity additions over the planning horizon, subject to revenue adequacy in each period. Although written as an inequality to reflect budget feasibility, the financial constraint~\eqref{eq:opt_problem_multi_b} will bind whenever the optimal solution exhibits strictly positive expansion \(q_t>0\).



The optimization problem is subject to the following constraints:
\begin{itemize}
    \item \textbf{Renewable-program feasibility}~\eqref{eq:opt_problem_multi_a}:  
    The total renewable credits sold by the program, \(D\!\left(p_t/e(Q_t)\right)\), must not exceed the amount of delivered renewable electricity, \(f(Q_t)\). This ensures that credit sales are backed by usable renewable generation after accounting for curtailment and saturation effects (Table~\ref{table:notations}).

    \item \textbf{Integrated financial feasibility}~\eqref{eq:opt_problem_multi_b}:  
    Per-period VRP revenue \(R(p_t,Q_t)=p_t\,D\!\left(p_t/e(Q_t)\right)\) must cover the integrated net cost of operating and expanding renewable capacity.
    
    To preserve the economic interpretation used later under separated financial accounts, we further decompose left-hand side into two components:
    (i) the \emph{system/program cost} \(C_S(Q_t)+k_t q_t\), borne by the program operator (e.g., integration, balancing, and administration), and
    (ii) the \emph{generation-side net cost} \(C_R(Q_t)-f(Q_t)\pi(Q_t)\), representing renewable operating costs net of wholesale energy-market revenue.
    This decomposition is not needed for the integrated benchmark, but it becomes essential in Section~\ref{sec:revenue_sharing}, where these components are evaluated on separate balance sheets. Detailed definitions and units are summarized in Table~\ref{table:notations}.

    \item \textbf{Capacity state transition}~\eqref{eq:opt_problem_multi_c}:  
    Renewable capacity evolves according to capacity accumulation, so that cumulative capacity in period \(t+1\) equals the previous level plus new additions. Capacity retirements are not modeled.

    \item \textbf{Feasibility bounds}~\eqref{eq:opt_problem_multi_d}:  
    Decision variables are restricted to nonnegativity: \(q_t \ge 0\) and \(p_t \ge 0\).
\end{itemize}

We restrict the utility’s decisions to be strictly financially feasible in each period, relying solely on voluntary demand for renewables. Under this formulation, neither the utility nor renewable generators incur losses, and no intertemporal borrowing or banking of funds is permitted. This profit-neutral, period-by-period feasibility condition reflects standard regulatory and accounting constraints faced by utilities and provides a transparent benchmark for subsequent analysis.

\begin{table}[h]
\centering
\caption{Model Elements and Notation}
\begin{tabularx}{\linewidth}{l l X}
\midrule

\textbf{Sets and indices} 
    & $\mathcal{T}=\{1,2,\dots,n\}$ 
    & Discrete planning horizon (e.g., years). \\
    & $t\in\mathcal{T}$ 
    & Time period index. \\[3pt]

\midrule
\textbf{Decision variables} 
    & $p_t\ge 0$ 
    & VRP price (low-carbon electricity premium) in period $t$ (\$/MWh). \\
    & $q_t\ge 0$ 
    & Renewable capacity addition in period $t$ (MW). \\[3pt]

\midrule
\textbf{State and transition}
    & $Q_t$ 
    & Cumulative installed renewable capacity at the end of period $t$ (MW). \\
    & $Q_t = Q_{t-1}+q_t$ 
    & Capacity state transition; $Q_0$ is given. \\[3pt]

\midrule
\textbf{Exogenous elements} 
    & $e(Q_t)\ge 0$ 
    & System marginal emissions intensity under renewable capacity $Q_t$ (ton-CO$_2$/MWh). \\
    & $f(Q_t)\ge 0$ 
    & Delivered (usable) renewable electricity as a function of capacity $Q_t$, capturing curtailment/saturation effects (MWh). \\
    & $\pi(Q_t)$ 
    & Effective marginal wholesale value received by renewable generators given $Q_t$ (\$/MWh). \\[3pt]

\midrule
\textbf{Demand}  
    & $D\!\left(\frac{p_t}{e(Q_t)}\right)$ 
    & VRP demand as a function of the effective carbon price $p_t/e(Q_t)$ (\$/ton-CO$_2$). \\
    & $D(x)=M e^{-\epsilon x}$ 
    & Baseline demand specification; $M$ is market size and $\epsilon>0$ is demand sensitivity. \\[3pt]

\midrule
\textbf{Cost elements} 
    & $C_R(Q_t)\ge 0$ 
    & Renewable operating/sustaining cost under capacity $Q_t$. \\
    & $C_S(Q_t)\ge 0$ 
    & System integration/program cost under capacity $Q_t$. \\
    & $k_t\ge 0$ 
    & Unit investment cost of new renewable capacity in period $t$ (\$/MW). \\[3pt]

\midrule
\end{tabularx}
\label{table:notations}
\end{table}

\clearpage

\subsection{VRP Demand for Renewable}

Demand for voluntary renewable programs (VPRs) arises from entities that are not legally obligated to procure renewables but choose to do so for strategic, reputational, or market reasons. Large technology companies such as Microsoft, Amazon, and Google purchase renewable credits to demonstrate environmental leadership and enhance brand value~\cite{egli_contribution_2023}. Certain industrial producers—such as emerging green hydrogen projects~\cite{roper_renewable_2025}—seek renewable procurement to qualify for subsidies, partnerships, or market differentiation. Many utilities also offer renewable power options to residential and commercial customers~\cite{dagher_residential_2017}, while green-building certification programs establish standards that incentivize renewable energy use~\cite{agarwal_role_2024}. Collectively, these voluntary buyers constitute the demand base that sustains the program.

\begin{condition}\label{cond:demand_function}
The demand for renewable follows an exponential form:
\begin{equation}\label{eq:demand_function}
    D\!\left(\frac{p}{e(Q)}\right)= M \, \exp\!\left(-\epsilon \frac{p}{e(Q)}\right),
\end{equation}
where \(M\) denotes the total potential market size at zero premium, \(\epsilon>0\) is a demand sensitivity parameter, and \(e(Q)\) is the grid-average emissions intensity given cumulative renewable capacity \(Q\) (ton-\(\mathrm{CO}_2\)/MWh).
\end{condition}

Importantly, the functional form of demand is assumed to be time-invariant. Any temporal variation arises endogenously through changes in the state variable \(Q\), rather than through explicit time dependence of preferences or market size. This formulation isolates the effect of renewable penetration on the emissions baseline—and hence the effective carbon-price signal faced by buyers—while abstracting from exogenous macroeconomic or social shocks.

The exponential specification in equation~\eqref{eq:demand_function} is supported by empirical evidence showing that voluntary demand for renewable declines in a strongly convex manner with respect to price. For instance, \citet{andor_equity_2018} applied a linear approximation to a decreasing convex demand; \citet{mewton_green_2011} estimated a constant-elasticity (power-law) demand function \(D = a p^{\beta}\) with \(\beta=-0.96\), indicating nearly unitary elasticity; and \citet{farhar_willingness_1999} and \citet{sundt_consumers_2014} reported exponential or semi-log relationships between renewable uptake and price.

The intuition for adopting the exponential form is threefold. First, when the renewable premium is zero \((p=0)\), all potential buyers are willing to participate the program, yielding \(D\!\left(0\right)=M\), the full market size. Second, because demand is voluntary rather than mandated, demand approaches zero as the effective carbon price increases indefinitely, i.e.,
\[
\lim_{p \to \infty} D\!\left(\frac{p}{e(Q)}\right) = 0.
\]
Third, the exponential function captures the empirically observed rapid, convex decline in willingness to pay as prices rise. Together, these properties make the exponential specification both behaviorally intuitive and empirically well supported for modeling VRP demand.

The term \(e(Q)\) in the denominator of equation~\eqref{eq:demand_function} represents the average emissions intensity of the power grid (ton-\(\mathrm{CO}_2\)/MWh) given the existing renewable capacity \(Q\). VRP buyers seek to differentiate their electricity consumption from this grid average. When the grid average reaches zero—corresponding to a fully decarbonized or net-zero system—VRP demand vanishes because no additional emissions benefit remains from purchasing renewable through the program.

Dividing the premium price \(p\) (in \$/MWh) by \(e(Q)\) normalizes the renewable premium into an effective carbon price (in \$/ton-\(\mathrm{CO}_2\)). In this sense, VRP demand is driven by the perceived value of avoided emissions, and the normalization \(p/e(Q)\) translates the renewable premium into the implicit carbon-price signal faced by buyers.

\subsection{Grid Properties with Renewable Penetration}

As the penetration level of renewables increases, the power grid undergoes fundamental structural changes. On the one hand, a higher share of renewable generation lowers the system’s average emissions intensity by displacing fossil generation. On the other hand, growing penetration leads to greater curtailment of renewable output, as exemplified by the well-known “duck curve” phenomenon observed in California~\cite{denholm_overgeneration_2015}. At the same time, the effective energy value received by renewables from the wholesale power market diminishes~\cite{hirth_market_2013}, since renewables frequently bid at zero or negative prices. These general trends motivate the functional properties we impose on the grid as a function of cumulative renewable capacity \(Q\).

\begin{condition}\label{ass:grid_functions}
We assume the following grid functional properties with respect to renewable capacity (or penetration level) \(Q\):
\begin{enumerate}
    \item \textbf{Grid-average emissions intensity \(e(Q)\) is decreasing:}
    \[
    e'(Q) < 0.
    \]

    \item \textbf{Delivered renewable electricity \(f(Q)\) is increasing and concave:}  
    \[
    f(0)=0, \qquad f'(Q) > 0, \qquad f''(Q) < 0.
    \]

    \item \textbf{Effective marginal wholesale value received by renewables \(\pi(Q)\) is decreasing:}  
    \[
    \pi'(Q) < 0.
    \]
\end{enumerate}
\end{condition}

The first two assumptions in Condition~\ref{ass:grid_functions} reflect the role of curtailment as renewable penetration increases. Expanding renewable capacity displaces fossil generation and therefore reduces average grid emissions intensity, justifying \(e'(Q) < 0\). At the same time, increasing penetration leads to growing curtailment, so that the amount of renewable electricity that can be effectively delivered increases at a diminishing rate, reflected by \(f'(Q) > 0\) and \(f''(Q) < 0\).

The monotonicity of the emissions-intensity function and the concavity of the delivery function capture saturation effects in both emissions reductions and usable renewable output. Consider solar photovoltaic generation as an example. Solar output is limited to daylight hours, with no production at night. As additional solar capacity is installed, there is a maximum fraction of total load that can be served without storage. The marginal usable output from renewables saturates as curtailment grows, motivating concavity in \(f(Q)\). While grid investments such as storage and transmission can mitigate these effects, they are not expected to reverse the overall trends of increasing curtailment and diminishing marginal impact at higher penetration levels.

The decreasing trend of the effective marginal price \(\pi(Q)\) is less immediate but equally important. This function does not represent the system-wide average wholesale price. Instead, it reflects the average price received by renewable generators during periods when they produce. Under the duck curve phenomenon, periods of high renewable output—such as midday hours for solar—are associated with suppressed wholesale prices. As renewable penetration increases, these low-price periods become more frequent and pronounced, reducing the average marginal revenue earned by renewables. Although high-price periods may still occur (e.g., during evening ramps), renewable generators cannot fully capture these price spikes due to limited output. This gradual erosion of wholesale-market revenues provides the central motivation for supplementing renewable generators’ income with credit-based revenues.

\subsection{Utility No-Banking Condition}

In regulated electricity markets, utilities generally operate under cost-of-service or other profit-neutral regulatory frameworks. Because electricity delivery is a natural-monopoly service, utilities are typically government-owned, nonprofit, or privately owned but subject to strict regulation that limits earnings to approved cost recovery and an authorized return~\citep{joskow_incentive_2008,borenstein_economics_2016}. In this setting, utilities cannot use VRPs to accumulate discretionary profits; the role of the program is instead to recover eligible costs and support renewable expansion at current decision period. This motivates our formulation in which the utility’s decision problem is constrained by per-period financial feasibility, while the objective is to maximize renewable capacity expansion.

\begin{condition}\label{ass:profit_neutral}
\textit{(No intertemporal banking)}\;
For each period \(t\in\mathcal{T}\), all financial and delivery conditions must be satisfied \emph{within} the period. Neither monetary surpluses/deficits nor renewable credits may be carried across periods. In particular:
\begin{enumerate}
    \item \emph{Integrated per-period revenue adequacy:}
    \[
        C(Q_t)+k_t q_t \le R(p_t,Q_t),
        \qquad \forall t\in\mathcal T,
    \]
    so that all eligible non-investment costs and current-period expansion costs must be covered by contemporaneous VRP revenue, no banking or borrowing revenue is allowed.

    \item \emph{Credit-sales feasibility (per-period):}
    \[
        D\!\left(\frac{p_t}{e(Q_t)}\right) \le f(Q_t),
        \qquad \forall t\in\mathcal T.
    \]
    Thus, credits sold in period \(t\) must be backed by renewable electricity delivered in the same period; no banking or borrowing of credits is allowed.
\end{enumerate}
\end{condition}

Condition~\ref{ass:profit_neutral} captures two core features of regulated utility finance. First, the VRP administered by utility is \emph{profit-neutral}: VRP revenues serve cost recovery rather than discretionary profit accumulation, so the relevant objective is renewable expansion rather than profit maximization. Second, feasibility is imposed on a \emph{no-banking} basis: neither financial surpluses/deficits nor renewable credits may be carried across time. This reflects the regulatory structure of utility business models, in which program revenues must remain tied to contemporaneous cost recovery and delivered service. The formulation is therefore institutionally grounded and regulatory-consistent.

\newpage
\section{Single-Period Analysis: Integrated Utility Benchmark}

In this section, we analyze a single-period benchmark in which the electric utility is vertically integrated, meaning that generation, system operation, and program administration are consolidated within a single firm and evaluated on a single financial balance sheet. Vertically integrated utilities recover both generation-related costs and system-level costs through regulated tariffs under cost-of-service or profit-neutral regulation, and therefore internalize the full economic tradeoff between pricing the renewable and expanding capacity. This business model remains prevalent in large parts of the United States—particularly in the Southeast and Midwest—where investor-owned and public utilities continue to operate as vertically integrated monopolies rather than participating in fully restructured wholesale markets (see, e.g., \citet{joskow_incentive_2008, borenstein_economics_2016}).

Modeling the utility as vertically integrated allows us to formulate a single-period optimization problem with a single financial feasibility constraint, which captures total system and generation costs in aggregate. This benchmark serves two purposes. First, it delivers sharp analytical results for optimal pricing and capacity expansion in a static setting. Second, it establishes a clean reference point for later sections, where we relax the integrated balance-sheet assumption to study revenue sharing across separate financial accounts and extend the analysis to multi-period settings in which myopic single-period decisions can be shown to be globally optimal under suitable conditions.

\subsection{Reduction from Multi-Period to Single-Period Formulation}

We consider a single period as a temporally neutral representation of the multi-period model. Because the profit-neutral regulatory condition enforces feasibility independently in each period, the optimization problem in every period \(t\in\mathcal{T}\) can be solved separately, with intertemporal linkage arising only through the capacity state transition \(Q_{t+1}=Q_t+q_t\). Hence, the single-period formulation corresponds to the optimization problem faced by the utility in any generic period, given the existing renewable capacity \(Q\). Dropping time subscripts makes the analysis applicable to all periods.

To streamline notation, define (i) the per-period VRP revenue
\[
R(p,Q)\;:=\;p\,D\!\left(\frac{p}{e(Q)}\right),
\]
and (ii) the integrated non-investment cost term
\[
C(Q)\;:=\;C_S(Q)+C_R(Q)-f(Q)\pi(Q),
\]
so that the integrated financial requirement is \(C(Q)+k q\), where \(k\) is the unit investment cost in the period under consideration.%
\footnote{All primitives and units are summarized in Table~\ref{table:notations}.}

The utility’s objective is to maximize the incremental renewable capacity \(q\), subject to contemporaneous renewable deliverability and an integrated financial feasibility constraint. The resulting single-period problem is:
\begin{subequations}\label{eq:opt_problem_single}
\begin{align}
    \max_{p, q} \quad & q \label{eq:opt_problem_single_obj} \\[4pt]
    \text{s.t.} \quad 
    & D\!\left(\frac{p}{e(Q)}\right) \le f(Q), \label{eq:opt_problem_single_a} \\[4pt]
    & C(Q)+k q \le R(p,Q), \label{eq:opt_problem_single_b} \\[4pt]
    & q \ge 0,\quad p \ge 0. \label{eq:opt_problem_single_c}
\end{align}
\end{subequations}

This single-period problem captures the core decision environment of the program under a vertically integrated benchmark: given the system state \(Q\) and market primitives \(\{D(\cdot),e(\cdot),f(\cdot),\pi(\cdot),C_R(\cdot),C_S(\cdot),k\}\), the utility determines the premium price \(p\) and renewable expansion \(q\) that jointly satisfy feasibility and financial balance within the same period.

\subsection{Phase Structure of the Integrated Single-Period Solution}

Theorem~\ref{thm:single_period_pstar} characterizes the unique optimal price for a given system state \(Q\).

\begin{theorem}[Single-Period Optimal Pricing]
\label{thm:single_period_pstar}
If the optimal solution exhibits strictly positive expansion \(q^*(Q)>0\), then the optimal price \(p^*(Q)\) exists, is unique, and is given by the following two-regime closed form:
\begin{equation}
p^*(Q)=
\begin{cases}
\dfrac{e(Q)}{\epsilon}, 
& \text{if } D\!\left(\dfrac{1}{\epsilon}\right)\le f(Q),\\[10pt]
\dfrac{e(Q)}{\epsilon}\ln\!\left(\dfrac{M}{f(Q)}\right),
& \text{if } D\!\left(\dfrac{1}{\epsilon}\right)> f(Q).
\end{cases}
\label{eq:thm1_pstar}
\end{equation}
\end{theorem}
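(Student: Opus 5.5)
The plan is to reduce the joint problem in $(p,q)$ to a one-dimensional revenue maximization in $p$ over the feasible set cut out by \eqref{eq:opt_problem_single_a}. Since $Q$ is fixed within the period, $C(Q)$ and $k>0$ are constants. Constraint \eqref{eq:opt_problem_single_b} then gives $q\le (R(p,Q)-C(Q))/k$, so for any feasible $p$ the best expansion is
\[
q(p)=\max\{0,(R(p,Q)-C(Q))/k\}.
\]
When $q^*(Q)>0$, the budget constraint binds and maximizing $q$ is equivalent to maximizing $R(p,Q)$ over $P(Q):=\{p\ge 0: D(p/e(Q))\le f(Q)\}$. The hypothesis $q^*>0$ serves only to rule out the degenerate case where every feasible $p$ yields $q=0$, in which case $p$ is not pinned down. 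Uniqueness of $p^*$ will then follow from strict unimodality of $R$ on $P(Q)$.

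\textbf{Step 1 (shape of $R$).} Substitute $x=p/e(Q)$ with $e(Q)>0$, so that
\[
R=e(Q)\,M\,x\,e^{-\epsilon x}.
\]
The derivative is $\partial R/\partial x=e(Q)Me^{-\epsilon x}(1-\epsilon x)$. Hence $R$ is strictly increasing on $[0,1/\epsilon]$ and strictly decreasing on $[1/\epsilon,\infty)$, with a unique unconstrained maximizer $p^u=e(Q)/\epsilon$.

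\textbf{Step 2 (shape of the feasible set).} Because $D$ is strictly decreasing in $p$, constraint \eqref{eq:opt_problem_single_a} is equivalent to $p\ge \underline p(Q)$. If $f(Q)\ge M$, then $\underline p=0$. Otherwise, solving $Me^{-\epsilon p/e(Q)}=f(Q)$ gives
\[
\underline p(Q)=\frac{e(Q)}{\epsilon}\ln\!\left(\frac{M}{f(Q)}\right).
\]
So $P(Q)=[\underline p,\infty)$ is a half-line. In both cases, $p^u\in P(Q)$ exactly when $D(p^u/e(Q))=D(1/\epsilon)\le f(Q)$. Note that $D(1/\epsilon)=M/e<M$, so when $f(Q)\ge M$ we are automatically in the first regime.

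\textbf{Step 3 (case split).} If $D(1/\epsilon)\le f(Q)$, the unconstrained maximizer $e(Q)/\epsilon$ is feasible, and by Step 1 it is the unique maximizer over $P(Q)$. If $D(1/\epsilon)>f(Q)$, then $\underline p>p^u$, so $P(Q)$ lies entirely in the region where $R$ is strictly decreasing. The unique maximizer is therefore the left endpoint $\underline p$, which is the second branch of \eqref{eq:thm1_pstar}.

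Existence is immediate in both cases, since the maximizer is attained at a point of $P(Q)$. The paired $q^*=(R(p^*,Q)-C(Q))/k$ is positive by hypothesis.

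\textbf{Main obstacle.} The only delicate point is justifying that maximizing $q$ is equivalent to maximizing $R$, with uniqueness of $p$. This needs $k>0$: if $k=0$ the objective would be unbounded or degenerate, so I would state $k>0$ explicitly. It also needs the hypothesis $q^*>0$, which guarantees $\max_{P(Q)}R>C(Q)$. Without it, any $p$ satisfying the budget with $q=0$ would be optimal, and uniqueness would fail. The condition $e(Q)>0$ is also needed for the change of variables; otherwise demand is degenerate, as the paper notes in its net-zero discussion.
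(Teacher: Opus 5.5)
Your proof is correct and is essentially the paper's argument. The appendix proof first runs through the KKT system (getting $\mu^*=1/k$ and $p^*=e(Q)/\epsilon+k\lambda^*$, then splitting on whether deliverability binds), but it settles optimality and uniqueness exactly as you do: it reduces to maximizing $R(\cdot,Q)$ over the half-line $p\ge\max\{0,\frac{e(Q)}{\epsilon}\ln(M/f(Q))\}$ and uses that $R$ increases up to $e(Q)/\epsilon$ and decreases beyond it. Your direct version therefore just makes the multiplier bookkeeping unnecessary, along with that step's tacit use of $C(Q)\ge 0$ to conclude $p^*>0$.

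One small correction to your side remark. At a price with $R(p,Q)<C(Q)$ no $q\ge 0$ is budget-feasible, so $q(p)=\max\{0,(R(p,Q)-C(Q))/k\}$ is valid only when $R(p,Q)\ge C(Q)$. Consequently, even an optimum with $q^*=0$ would force $\max_{P(Q)}R=C(Q)$, leaving the unique revenue maximizer as the only feasible price. So uniqueness would not actually fail without the hypothesis $q^*>0$.
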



\emph{Intuition and sketch of proof.}
The single-period optimization problem admits a transparent decision logic. Although the utility’s objective is to maximize incremental renewable capacity \(q\), expansion is constrained entirely by contemporaneous program revenue. As a result, the problem can be decomposed into three steps.

First, for a given renewable capacity \(Q\), the utility chooses the program price \(p\) to maximize revenue \(R(p,Q)\), subject to the deliverability constraint that program sales cannot exceed delivered renewable output. This step is independent of costs and the expansion decision and determines the maximal revenue that can be extracted from the voluntary demand market.

Second, the integrated financial constraint requires that revenue cover the fixed cost term \(C(Q)\). When the financial constraint binds, any residual revenue beyond \(C(Q)\) becomes available for capacity expansion.

Third, residual revenue is converted one-for-one into renewable capacity through the linear investment cost \(k\), yielding a unique expansion level \(q^*\).

\begin{corollary}[Single-period expansion under the optimal price]
\label{cor:qstar}
Given the optimal price \(p^*(Q)\) in Theorem~\ref{thm:single_period_pstar}, the optimal expansion level is determined by the binding integrated financial constraint:
\begin{equation}
q^*(Q)
=
\frac{R\!\big(p^*(Q),Q\big)-C(Q)}{k}.
\label{eq:thm1_qstar}
\end{equation}
\end{corollary}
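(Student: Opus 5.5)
The statement to prove is Corollary~\ref{cor:qstar}: at the optimum, the financial constraint \eqref{eq:opt_problem_single_b} binds, and the optimal price $p^*(Q)$ of Theorem~\ref{thm:single_period_pstar} is used. The plan is to treat the single-period problem \eqref{eq:opt_problem_single} as a two-stage problem. First I fix $p$, then optimize over $q$, and finally optimize over $p$.

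\textbf{Step 1 (inner problem in $q$).} For a fixed feasible price $p$, one satisfying \eqref{eq:opt_problem_single_a} and $p\ge 0$, the remaining constraints on $q$ are $q\ge 0$ and $kq\le R(p,Q)-C(Q)$.
\begin{itemize}
\item Assume $k>0$. The objective $q$ is then strictly increasing and the feasible set is an interval, so the best choice is $\hat q(p)=\big(R(p,Q)-C(Q)\big)/k$ whenever this is nonnegative.
\item If $R(p,Q)<C(Q)$, that price is infeasible.
\end{itemize}
This gives the reduced problem
\[
\max\Big\{\tfrac{R(p,Q)-C(Q)}{k}\;:\; p\ge 0,\ D\big(p/e(Q)\big)\le f(Q)\Big\}.
\]

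\textbf{Step 2 (outer problem in $p$).} The map $p\mapsto (R(p,Q)-C(Q))/k$ is a positive affine transformation of $R(\cdot,Q)$. Hence its maximizers over the deliverability-feasible set coincide with the revenue maximizers over that set, and these are exactly what Theorem~\ref{thm:single_period_pstar} identifies.

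For completeness I would check this directly using $D(x)=Me^{-\epsilon x}$. Write $R(p,Q)=pMe^{-\epsilon p/e(Q)}$.
\begin{itemize}
\item $R$ is unimodal in $p$, with its unconstrained peak at $p=e(Q)/\epsilon$.
\item Constraint \eqref{eq:opt_problem_single_a} is equivalent to the lower bound $p\ge \frac{e(Q)}{\epsilon}\ln\!\big(M/f(Q)\big)$.
\item If the peak satisfies the bound, it is optimal. Otherwise $R$ is decreasing on the feasible region, so the bound itself is optimal.
\end{itemize}
These two cases are the two regimes in \eqref{eq:thm1_pstar}, so the optimal $p$ is $p^*(Q)$.

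\textbf{Step 3 (combine).} Substituting $p^*(Q)$ into $\hat q$ gives \eqref{eq:thm1_qstar}. The hypothesis $q^*(Q)>0$ from the theorem guarantees $R(p^*(Q),Q)>C(Q)$. Therefore the nonnegativity branch is inactive and \eqref{eq:opt_problem_single_b} binds.

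The main obstacle is the step justifying that the constraint binds and that the optimal pair is $(p^*,q^*)$ rather than some other pair. This requires arguing that no pair $(p,q)$ with a non-revenue-maximizing $p$ can yield a larger $q$. The monotone reduction in Step 2 handles this: any feasible pair $(p,q)$ satisfies
\[
q\le \frac{R(p,Q)-C(Q)}{k}\le \frac{R(p^*,Q)-C(Q)}{k}.
\]
This argument needs the implicit assumption $k>0$. If $k=0$, the problem is unbounded and the corollary's formula is undefined, so I would state $k>0$ explicitly.
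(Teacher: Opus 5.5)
Your proposal is correct and is essentially the paper's argument. The paper's proof of Theorem~\ref{thm:single_period_pstar}, and Step~2 of the proof of Proposition~\ref{prop:general_pricing_reduction}, show the financial constraint binds because a slack constraint would allow increasing $q$, then reduce the problem to maximizing $R(\cdot,Q)$ over deliverable prices via $q=(R(p,Q)-C(Q))/k$, which is exactly your inner/outer decomposition. The paper's KKT multipliers are not needed for this corollary, and your explicit $k>0$ is the same assumption the paper makes in the proof of Proposition~\ref{prop:general_pricing_reduction}.
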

\emph{Intuition and sketch of proof.} Substituting the closed-form \(p^*(Q)\) from \eqref{eq:thm1_pstar} yields an explicit expression for \(q^*(Q)\) in each regime.

This reduction implies that the equilibrium price is characterized by the revenue-maximizing solution, possibly adjusted by the binding renewable deliverability constraint. The resulting price admits the closed form in \eqref{eq:thm1_pstar}, establishing uniqueness. Given this price, \(q^*\) follows immediately from \eqref{eq:thm1_qstar}. The formal argument follows from analyzing the Karush--Kuhn--Tucker (KKT) conditions, provided in Appendix~A.

\begin{corollary}[Optimal Price Decreases with Renewable Capacity]
Suppose the demand sensitivity parameter \(\epsilon\) is fixed. Since the optimal price satisfies \(p^*(Q)\propto e(Q)\) and the grid-average emissions intensity \(e(Q)\) is strictly decreasing in cumulative renewable capacity \(Q\), it follows that \(p^*(Q)\) decreases monotonically with \(Q\).
\end{corollary}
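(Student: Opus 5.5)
The plan is to work directly from the two-regime closed form \eqref{eq:thm1_pstar} of Theorem~\ref{thm:single_period_pstar} and the monotonicity properties in Condition~\ref{ass:grid_functions}. Throughout, I restrict to states $Q$ at which Theorem~\ref{thm:single_period_pstar} applies, i.e.\ $q^*(Q)>0$. I also use $e(Q)>0$ on the relevant domain. This is consistent with $e\ge 0$ and $e'<0$, since a strictly decreasing nonnegative function cannot vanish at an interior point.

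\textbf{Step 1: the regime boundary.} The phrase ``$p^*(Q)\propto e(Q)$'' is literally true only in the first regime. In the second regime the factor $\ln(M/f(Q))$ also depends on $Q$, so the argument must treat both regimes and the switch between them. Under Condition~\ref{cond:demand_function}, $D(1/\epsilon)=Me^{-1}$. Since $f$ is strictly increasing with $f(0)=0$, the set $\{Q: f(Q)\ge Me^{-1}\}$ is an upper interval $[\bar Q,\infty)$, where $\bar Q$ solves $f(\bar Q)=Me^{-1}$ (or the set is empty if $f$ never reaches this level). Hence the deliverability-constrained regime applies exactly on $Q<\bar Q$, and the unconstrained regime applies on $Q\ge\bar Q$.

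\textbf{Step 2: monotonicity within each regime.} On $Q\ge\bar Q$ we have $p^*(Q)=e(Q)/\epsilon$, which is strictly decreasing because $e'<0$ and $\epsilon>0$. On $Q<\bar Q$ we have $p^*(Q)=\frac{1}{\epsilon}\,e(Q)\,g(Q)$ with $g(Q):=\ln(M/f(Q))$. Here $g'(Q)=-f'(Q)/f(Q)<0$. Moreover $f(Q)<Me^{-1}$ gives $g(Q)>1>0$. So $p^*$ is a product of two strictly positive, strictly decreasing functions, and
\[
\frac{d p^*}{dQ}=\frac{1}{\epsilon}\big(e'(Q)g(Q)+e(Q)g'(Q)\big)<0 .
\]
The positivity of $g$ is the point that needs checking: a decreasing function times a decreasing function need not be decreasing if one factor is negative, and the regime condition is exactly what rules this out.

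\textbf{Step 3: gluing across $\bar Q$ (the main obstacle).} Piecewise monotonicity alone does not exclude an upward jump at the regime switch. I expect this to be the only real subtlety. At $Q=\bar Q$, $g(\bar Q)=\ln(e)=1$, so the two branches coincide and $p^*$ is continuous at $\bar Q$. A function that is continuous on an interval and strictly decreasing on each of $[0,\bar Q]$ and $[\bar Q,\infty)$ is strictly decreasing on the whole interval. Therefore $p^*(Q)$ decreases monotonically in $Q$, which proves the corollary.
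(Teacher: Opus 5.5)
Your proof is correct, and it is more complete than what the paper provides. The paper's proof is only the one-line observation built into the statement, which it calls ``immediate'': $p^*(Q)\propto e(Q)$ and $e'<0$. That proportionality holds with a $Q$-independent constant only in the unconstrained regime $f(Q)\ge D(1/\epsilon)=M\exp(-1)$, where $p^*(Q)=e(Q)/\epsilon$. In the deliverability-binding regime the factor is $\ln(M/f(Q))/\epsilon$, which varies with $Q$, so the paper's reasoning does not literally apply there.

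Your Steps 2--3 supply exactly what is missing. The regime condition gives $\ln(M/f(Q))>1>0$, and $f'>0$ makes this factor strictly decreasing. Hence $p^*$ is a product of two positive, strictly decreasing functions. The two branches also agree at $\bar Q$, because $\ln(M/f(\bar Q))=1$.

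The paper's shortcut is enough for the region used in Theorem~\ref{thm:long_run_capacity}, where deliverability is assumed slack for $Q\ge Q^\dagger$. Your version additionally covers the low-penetration states where deliverability binds. These are precisely the states behind the paper's interpretation that premiums are high early on. You also show that in that regime the price falls through two channels: declining emissions intensity and easing scarcity of deliverable output.

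Two small points. First, write $(0,\bar Q]$ rather than $[0,\bar Q]$, since $f(0)=0$ makes $Q=0$ infeasible. Second, the set $\{Q:q^*(Q)>0\}$ on which Theorem~\ref{thm:single_period_pstar} applies need not be an interval. This is harmless, because you have in fact shown that the closed-form map is strictly decreasing on all of $(0,\infty)$, so its restriction to any subset is too.
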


Although the proof is immediate, the economic implication is significant. When renewable penetration is low (small \(Q\)), the equilibrium premium \(p^*\) is high, meaning that each unit of renewable electricity can command a substantial payment. This highlights the central role of VRP markets in supporting early-stage renewable deployment: high premiums provide strong financial incentives for initial capacity expansion.

As cumulative renewable capacity increases and emissions intensity declines, the optimal premium falls. Consequently, the marginal financial support provided by the VRP demand diminishes with scale. This result formalizes a widely observed real-world pattern: voluntary renewable programs are most effective during early phases of decarbonization, while their influence naturally attenuates as the system approaches low-emissions intensity.

While our baseline assumes exponential demand in the scaled premium $p/e(Q)$, the core single-period result does not depend on this specific functional form. Proposition~\ref{prop:general_pricing_reduction} replaces the closed-form exponential specification with a generic per-period revenue function $R(p,Q)$, requiring only continuity in $p$ and the natural market condition $R(p,Q)\to 0$ as $p\to\infty$. Detailed implications see Section 7.

\begin{proposition}[General Pricing Reduction]\label{prop:general_pricing_reduction}
Let $\mathcal P(Q)\subseteq[0,\infty)$ denote a nonempty closed feasible price set, and let $R:[0,\infty)\times \mathbb{R}_+\to\mathbb{R}_+$ denote per-period VRP revenue. Assume:
\begin{enumerate}
\item[(A1)] (\emph{Continuity}) $p\mapsto R(p,Q)$ is continuous on $\mathcal P(Q)$;
\item[(A2)] (\emph{Vanishing revenue at high prices}) $\lim_{p\to\infty} R(p,Q)=0$;
\end{enumerate}
If the optimal solution of the single-period problem satisfies $q^*(Q)>0$, then optimal pricing maximizes revenue,
\[
p^*(Q)\in \arg\max_{p\in\mathcal P(Q)} R(p,Q),
\]
\end{proposition}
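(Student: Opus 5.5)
The plan has two parts. First, use (A1)--(A2) to show that the revenue maximum over $\mathcal P(Q)$ is attained. Second, use an exchange argument on the single-period problem \eqref{eq:opt_problem_single} to show that any optimal $(p^*,q^*)$ with $q^*>0$ must use a revenue-maximizing price.

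\emph{Step 1 (existence).} Write $R^\star(Q):=\sup_{p\in\mathcal P(Q)}R(p,Q)$.

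If $R^\star(Q)=0$, then $R\equiv 0$ on $\mathcal P(Q)$ because $R\ge 0$. In that case every feasible price is a maximizer, and the argmax is nonempty.

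Otherwise fix $\bar p\in\mathcal P(Q)$ with $R(\bar p,Q)>0$. By (A2) there is $P$ such that $R(p,Q)<R(\bar p,Q)$ for all $p>P$; we may take $P\ge \bar p$. Hence
\[
R^\star(Q)=\sup_{p\in\mathcal P(Q)\cap[0,P]}R(p,Q).
\]
The set $\mathcal P(Q)\cap[0,P]$ is closed and bounded, hence compact, and it contains $\bar p$. By (A1) and the Weierstrass theorem, the supremum is attained. So $\arg\max_{p\in\mathcal P(Q)}R(p,Q)$ is nonempty, and $R^\star(Q)$ is finite.

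\emph{Step 2 (binding budget).} Let $(p^*,q^*)$ be optimal with $q^*>0$. In the single-period problem, the only constraint involving $q$ besides $q\ge 0$ is the financial constraint \eqref{eq:opt_problem_single_b}.

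Suppose that constraint were slack: $C(Q)+kq^*<R(p^*,Q)$. If $k>0$, then $q^*+\delta$ remains feasible for small $\delta>0$, contradicting optimality. Hence
\[
kq^*=R(p^*,Q)-C(Q).
\]
The case $k=0$ makes the problem unbounded unless it is excluded. I would assume $k>0$, as Corollary~\ref{cor:qstar} implicitly does.

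\emph{Step 3 (exchange argument).} Suppose, for contradiction, that $R(p^*,Q)<R^\star(Q)$. Take a maximizer $\hat p\in\arg\max_{p\in\mathcal P(Q)} R(p,Q)$ from Step 1. Set
\[
\hat q:=\frac{R^\star(Q)-C(Q)}{k}.
\]
Then $\hat q>q^*>0$, so $\hat q\ge 0$. Since $\hat p\in\mathcal P(Q)$, the pair $(\hat p,\hat q)$ satisfies the deliverability constraint \eqref{eq:opt_problem_single_a} and $p\ge 0$. It also satisfies \eqref{eq:opt_problem_single_b} with equality.

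So $(\hat p,\hat q)$ is feasible with a strictly larger objective, contradicting the optimality of $(p^*,q^*)$. Therefore $R(p^*,Q)=R^\star(Q)$, that is, $p^*(Q)\in\arg\max_{p\in\mathcal P(Q)}R(p,Q)$.

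The main obstacle is making precise how $\mathcal P(Q)$ relates to the constraints of \eqref{eq:opt_problem_single}. The argument needs $\mathcal P(Q)$ to be exactly the set of prices satisfying \eqref{eq:opt_problem_single_a} and $p\ge0$, and the financial constraint to be the only coupling between $p$ and $q$; I would state this identification explicitly.

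I would also check that the hypothesis $q^*>0$ is what rules out degenerate cases. When $q^*=0$, the budget may be slack, and any price with $R(p,Q)\ge C(Q)$ is optimal, so the conclusion genuinely requires $q^*>0$.
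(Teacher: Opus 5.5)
Your proposal is correct and follows the paper's proof essentially step for step. A revenue maximizer exists by truncating $\mathcal P(Q)$ via (A2) and applying Weierstrass; the financial constraint must bind when $q^*>0$; and any strictly higher-revenue feasible price would support a strictly larger feasible expansion. Your Step 1 is in fact slightly more careful than the paper's: the case split on $R^\star(Q)=0$ and the cutoff $P\ge\bar p$ repair two small slips there (the paper invokes (A2) to get $R(p,Q)\le\tilde R$ for all large $p$, which fails when $\tilde R=0$, and it tacitly needs its reference price to lie in the truncated set).
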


\emph{Intuition and sketch of proof.}
Fix $Q$. Because $R(\cdot,Q)$ is continuous on the nonempty closed set $\mathcal P(Q)\subseteq[0,\infty)$ and $\lim_{p\to\infty}R(p,Q)=0$, the revenue maximization problem $\max_{p\in\mathcal P(Q)}R(p,Q)$ attains at least one maximizer (the vanishing tail implies no maximizing sequence can escape to infinity). In the single-period expansion problem, price affects feasibility only through the available revenue $R(p,Q)$, while the objective is to maximize $q$. When $q^*(Q)>0$, the financial constraint must bind; thus maximizing $q$ is equivalent to maximizing $R(p,Q)$ over feasible prices. Hence any optimal solution selects $p^*(Q)\in\arg\max_{p\in\mathcal P(Q)}R(p,Q)$.

\subsection{Long-Run Capacity Limit and Market-Driven Equilibrium}

Having characterized the optimal single-period pricing and expansion policy, we now turn to the long-run implications of the VRP program. The final regime corresponds to a no-expansion equilibrium, where the optimal expansion decision satisfies \(q^*(Q)=0\). Conceptually, this state represents the \emph{maximum achievable impact} of a voluntary, market-driven renewable expansion mechanism: all financially viable low-carbon capacity has been deployed, and further expansion ceases because program revenues are exactly exhausted by system costs.

Recall that under the integrated formulation, the optimal single-period expansion is given by
\[
q^*(Q)
= \frac{R(Q)-C(Q)}{k}
,
\]
The long-run capacity limit is therefore characterized by the stopping condition
\[
q^*(Q)=0
\quad\Longleftrightarrow\quad
C(Q)=R(Q)=p^*(Q)D(\frac{p^*}{e(Q)})'=\frac{M}{e\,\epsilon}\,e(Q).
\]

\begin{theorem}[Existence and uniqueness of the long-run capacity limit]\label{thm:long_run_capacity}
Assume there exists \(Q^\dagger\) such that the deliverability constraint~\ref{eq:opt_problem_single_a} is non-binding for all \(Q\ge Q^\dagger\). Suppose:
\begin{itemize}
    \item \(C(Q)\) is strictly increasing for all \(Q\ge Q^\dagger\) and \(\lim_{Q\to\infty}C(Q)=+\infty\);
    \item \(C(Q^\dagger)\le \dfrac{M}{e\,\epsilon}\,e(Q^\dagger)\) to ensure financial feasibility at \(C(Q^\dagger)\).
\end{itemize}
Then there exists a unique \(Q^*\ge Q^\dagger\) such that \(q^*(Q^*)=0\).
Moreover, \(Q^*\) is uniquely characterized by the equation
\begin{equation}
C(Q)=\frac{M}{e\,\epsilon}\,e(Q).
\label{eq:Qstar_equation}
\end{equation}
\end{theorem}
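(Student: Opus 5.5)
The plan is to reduce the statement to a one-dimensional root-finding problem for the gap function
\[
G(Q)\;:=\;\frac{M}{e\,\epsilon}\,e(Q)-C(Q), \qquad Q\ge Q^\dagger .
\]

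\textbf{Step 1: the optimal price on $[Q^\dagger,\infty)$.} Because the deliverability constraint~\eqref{eq:opt_problem_single_a} is non-binding there, Theorem~\ref{thm:single_period_pstar} puts us in the first regime, so $p^*(Q)=e(Q)/\epsilon$. Substituting into Condition~\ref{cond:demand_function} gives
\[
R\big(p^*(Q),Q\big)=\frac{e(Q)}{\epsilon}\,M e^{-1}=\frac{M}{e\,\epsilon}\,e(Q).
\]

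\textbf{Step 2: locating the stopping point.} For $Q\ge Q^\dagger$, Corollary~\ref{cor:qstar} gives $q^*(Q)=\max\{G(Q),0\}/k$, interpreting the no-expansion case as the feasibility boundary. Hence $q^*(Q)=0$ at a point of financial balance exactly when $G(Q)=0$, which is equation~\eqref{eq:Qstar_equation}.

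\textbf{Step 3: existence.} I will use the intermediate value theorem. The hypothesis $C(Q^\dagger)\le \frac{M}{e\epsilon}e(Q^\dagger)$ gives $G(Q^\dagger)\ge 0$. Since $e(Q)\ge 0$ is decreasing, the revenue term is bounded above by $\frac{M}{e\epsilon}e(Q^\dagger)$. Meanwhile $C(Q)\to+\infty$, so $G(Q)\to-\infty$ and some $\bar Q$ has $G(\bar Q)<0$. Continuity of $G$ requires continuity of $C$ and $e$. This is implicit: $e$ is differentiable by Condition~\ref{ass:grid_functions}, and I would state continuity of $C$ explicitly as a standing regularity assumption, since $C$ is built from $C_S$, $C_R$, $f$, $\pi$. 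A root $Q^*\in[Q^\dagger,\bar Q)$ then follows.

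\textbf{Step 4: uniqueness.} On $[Q^\dagger,\infty)$, $e(\cdot)$ is strictly decreasing ($e'<0$) and $C$ is strictly increasing by hypothesis, so $G$ is strictly decreasing and has at most one zero. Uniqueness also explains why $Q^*$ is an absorbing long-run limit: for $Q<Q^*$ in this range $G>0$ and expansion is positive, and for $Q>Q^*$ the program is infeasible.

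\textbf{Main obstacle.} The one delicate point is the logical link between ``$q^*(Q)=0$'' and the equation $G(Q)=0$. When $G(Q)>0$ the financial constraint~\eqref{eq:opt_problem_single_b} is slack at $q=0$, so the optimum is $q>0$ and $q^*\neq 0$. When $G(Q)<0$ the problem is infeasible, so one must either read $q^*=0$ as undefined there or note that equation~\eqref{eq:Qstar_equation} characterizes exactly the point where the maximal feasible $q$ equals zero. I will handle this by defining $q^*(Q)$ as the value of~\eqref{eq:opt_problem_single} whenever it is feasible. The zero of $q^*$ is then the boundary point where the maximal revenue exactly covers $C(Q)$, which is precisely~\eqref{eq:Qstar_equation}.
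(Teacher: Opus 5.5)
Your proposal is correct and takes essentially the same route as the paper: on $[Q^\dagger,\infty)$ you use $p^*=e(Q)/\epsilon$ to reduce $q^*(Q)=0$ to finding a root of $\frac{M}{e\epsilon}e(Q)-C(Q)$, get existence from the intermediate value theorem, and get uniqueness from strict monotonicity. Two of your steps are slightly more careful than the paper's: you justify $G\to-\infty$ through the bound $e(Q)\le e(Q^\dagger)$, whereas the paper only notes that revenue is finite at each $Q$, and you explicitly handle infeasibility when $G<0$.
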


\emph{Intuition and sketch of proof.}
For sufficiently large \(Q\) (in particular, for \(Q\ge Q^\dagger\)), the deliverability constraint does not bind, and the optimal price is given by \(p^*(Q)=\frac{e(Q)}{\epsilon}\), yielding
\[
R(Q)=p^*(Q)\,D\!\left(\frac{p^*(Q)}{e(Q)}\right)
=\frac{M}{e\,\epsilon}\,e(Q).
\]
At the no-expansion equilibrium, program revenue must exactly cover total net cost, implying \(R(Q)=C(Q)\).
Because \(C(Q)\) is strictly increasing while \(e(Q)\) (and hence \(R(Q)\)) is strictly decreasing in \(Q\), the two curves intersect at most once. Existence follows from the boundary conditions \(C(Q^\dagger)\le R(Q^\dagger)\) and \(\lim_{Q\to\infty}C(Q)=+\infty\), which guarantee a unique intersection by the intermediate value theorem.

\begin{proposition}[Non-vanishing emissions at the long-run limit]
At the long-run capacity limit \(Q^*\), the grid emissions intensity satisfies \(e(Q^*)>0\).
\end{proposition}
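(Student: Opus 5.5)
The plan is a proof by contradiction, using the characterization of \(Q^*\) in Theorem~\ref{thm:long_run_capacity}. Suppose instead that \(e(Q^*)=0\). Since \(Q^*\ge Q^\dagger\), the deliverability constraint does not bind there, so Theorem~\ref{thm:single_period_pstar} gives the unconstrained price \(p^*(Q^*)=e(Q^*)/\epsilon=0\). The maximal revenue is then
\[
R(Q^*)=\frac{M}{\mathrm{e}\,\epsilon}\,e(Q^*)=0 .
\]
More fundamentally, as the paper notes, VRP demand at effective carbon price \(p/e\) loses meaning when \(e=0\): buyers gain no emissions benefit. So for every admissible price, revenue \(R(p,Q^*)=0\) (taking the limit \(p/e\to\infty\) for \(p>0\), so \(D\to 0\)).

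By equation~\eqref{eq:Qstar_equation}, \(Q^*\) satisfies \(C(Q^*)=R(Q^*)=0\). I will show this contradicts the hypotheses of Theorem~\ref{thm:long_run_capacity}. Those hypotheses say \(C\) is strictly increasing on \([Q^\dagger,\infty)\) and \(C(Q^\dagger)\le \frac{M}{\mathrm{e}\epsilon}e(Q^\dagger)\), while \(e\) is strictly decreasing and nonnegative.

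Two cases arise.
\begin{itemize}
\item If \(Q^*>Q^\dagger\), then \(e(Q)>e(Q^*)\ge 0\) for all \(Q<Q^*\), because \(e\) is strictly decreasing. The function \(g(Q):=\frac{M}{\mathrm{e}\epsilon}e(Q)-C(Q)\) is strictly decreasing on \([Q^\dagger,\infty)\), since \(e\) decreases and \(C\) increases. We also have \(g(Q^*)=0\), and \(g(Q)=0\) cannot have two roots. Then \(e(Q^*)=0\) forces \(C(Q^*)=0\), hence \(C(Q)<0\) for \(Q\in[Q^\dagger,Q^*)\). I will argue this is impossible: it would mean wholesale revenue \(f\pi\) more than covers the costs \(C_S+C_R\) while emissions are still positive. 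In that situation the integrated utility would be expanding without any program revenue, which is outside the VRP-driven regime considered here. I expect this step to be the weakest.
\item If \(Q^*=Q^\dagger\), the argument is the same applied at the single point \(Q^\dagger\).
\end{itemize}

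I expect the main obstacle to be that nothing in Conditions~\ref{ass:grid_functions}--\ref{ass:profit_neutral} rules out \(C(Q)\le 0\). The clean route is therefore to make explicit a natural positivity requirement: at any state with positive renewable capacity the integrated net cost is strictly positive, \(C(Q)>0\). This requirement is consistent with \(\pi'(Q)<0\) eroding wholesale revenue and with \(C_S,C_R\ge 0\). Under it the conclusion is immediate. The equation \(C(Q^*)=\frac{M}{\mathrm{e}\epsilon}e(Q^*)\) has a strictly positive left-hand side, so \(e(Q^*)=\frac{\mathrm{e}\,\epsilon}{M}C(Q^*)>0\).

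I would present the proof this way, then remark on the economic reading. Because revenue scales linearly with \(e(Q)\), program revenue reaches zero before or at the point where emissions reach zero. Any positive residual system cost therefore stops expansion strictly before full decarbonization.
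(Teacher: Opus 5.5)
Your final argument is the paper's own: \(e(Q^*)=0\) forces \(p^*(Q^*)=0\) and \(R(Q^*)=0\), which contradicts \(C(Q^*)=R(Q^*)\) once \(C(Q^*)>0\); the paper merely asserts this (``strictly positive for nontrivial systems'') and you impose it as an explicit hypothesis, so the inconclusive case analysis in the middle can be dropped. Be aware, though, that no cost hypothesis is needed, and that assuming only \(C(Q^*)>0\) would be circular since \(C(Q^*)=\frac{M}{\mathrm{e}\,\epsilon}e(Q^*)\): the facts you already list, \(e'<0\) on all of \([0,\infty)\) (Condition~\ref{ass:grid_functions}) and \(e\ge 0\) (Table~\ref{table:notations}), mean that \(e(Q^*)=0\) would give \(e(Q)<0\) for every \(Q>Q^*\), so \(e(Q^*)>0\) holds outright.
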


\emph{Proof sketch.}
If \(e(Q^*)=0\), then \(p^*(Q^*)=0\) and hence \(R(Q^*)=0\). Since \(C(Q)\ge 0\) and is strictly positive for nontrivial systems, the equality \(R(Q^*)=C(Q^*)\) cannot hold. Therefore, \(e(Q^*)>0\).

\emph{Interpretation.}
The equilibrium \(Q^*\) represents the \emph{long-run capacity limit} attainable through voluntary renewable program markets alone. Because \(e(Q^*)>0\), the system does not reach full decarbonization at this limit: residual emissions persist even when all market-driven expansion opportunities are exhausted. This establishes a fundamental boundary for voluntary mechanisms and highlights the necessity of complementary regulatory or policy interventions to achieve net-zero outcomes.

\begin{corollary}[Market-driven nature of the long-run capacity limit]
The equilibrium capacity \(Q^*\) depends on market parameters \((M,\epsilon)\) and structural system properties \((C,e,f,\pi)\), but is independent of the renewable deployment cost parameter \(k\).
\end{corollary}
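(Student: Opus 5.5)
The plan is to read $Q^*$ off the characterization already obtained in Theorem~\ref{thm:long_run_capacity}, and then to check which primitives enter that characterization. The first step is to recall the hypotheses of Theorem~\ref{thm:long_run_capacity}. On the region $Q\ge Q^\dagger$ the deliverability constraint is slack. Theorem~\ref{thm:single_period_pstar} then gives $p^*(Q)=e(Q)/\epsilon$, so the maximal revenue is
\[
R(Q)=\frac{M}{e\,\epsilon}\,e(Q),
\]
where the constant $e$ is Euler's number. By Corollary~\ref{cor:qstar}, the stopping condition $q^*(Q)=0$ holds if and only if $R(Q)=C(Q)$. Theorem~\ref{thm:long_run_capacity} shows that this equation has exactly one root $Q^*\ge Q^\dagger$. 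The key observation is that $Q^*$ is therefore the unique zero of
\[
G(Q):=C(Q)-\frac{M}{e\,\epsilon}\,e(Q),
\qquad
C(Q)=C_S(Q)+C_R(Q)-f(Q)\pi(Q).
\]

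The second step is to list the inputs of $G$. These are $M$, $\epsilon$, the emissions map $e(\cdot)$, and the cost primitives $C_S,C_R,f,\pi$. The function $f$ also enters through the threshold $Q^\dagger$, since $Q^\dagger$ is defined as the point beyond which $D(1/\epsilon)\le f(Q)$. Neither $G$ nor $Q^\dagger$ contains $k$, because $k$ enters the single-period problem only through the term $kq$ in \eqref{eq:opt_problem_single_b}. Formally, fix all other primitives and take any two costs $k_1,k_2>0$. Both the set $\{Q\ge Q^\dagger: q^*(Q;k)=0\}$ and the equation defining it are identical for $k_1$ and $k_2$. Uniqueness from Theorem~\ref{thm:long_run_capacity} then gives $Q^*(k_1)=Q^*(k_2)$.

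The third step, which I expect to be the main subtlety, is to justify that $q^*(Q)=0$ is equivalent to $R(p^*(Q),Q)=C(Q)$ without dividing by $k$ carelessly. By Corollary~\ref{cor:qstar}, $q^*=(R-C)/k$. For $k>0$, this is zero exactly when $R=C$, so $k$ affects only the \emph{rate} of expansion, not the \emph{location} of the fixed point. I would also remark on a second point for completeness. The statement concerns the stationary capacity level, not the time path toward it. In the multi-period problem, $k_t$ changes how many periods are needed to reach $Q^*$, and it may cause overshooting when $q_t$ is large. The corollary claims independence only for the root of \eqref{eq:Qstar_equation}. Finally, I would note that the hypotheses of Theorem~\ref{thm:long_run_capacity}, namely monotonicity of $C$ and the feasibility condition at $Q^\dagger$, also involve only $(M,\epsilon,C,e,f)$. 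Hence the existence and uniqueness of $Q^*$ is itself $k$-independent.
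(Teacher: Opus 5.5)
Your proposal is correct and follows the paper's own reasoning. $Q^*$ is the root of $C(Q)=\frac{M}{e\,\epsilon}\,e(Q)$ from Theorem~\ref{thm:long_run_capacity}, which contains no $k$, and $k>0$ only rescales $q^*(Q)=(R(Q)-C(Q))/k$, hence only the speed of approach; your extra checks that $Q^\dagger$ and the hypotheses of Theorem~\ref{thm:long_run_capacity} are also $k$-free make the argument slightly more complete than the paper's one-line interpretation.
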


\emph{Interpretation.}
The deployment cost \(k\) affects only the \emph{speed} of expansion—i.e., the magnitude of \(q^*(Q)\) when \(q^*(Q)>0\)—but not the terminal capacity level \(Q^*\). A larger market size \(C\) or lower demand sensitivity \(\epsilon\) increases the attainable equilibrium capacity, whereas changes in \(k\) merely rescale the rate at which the system approaches this limit.

\section{Multi-Period Analysis and Optimality of Myopic Policies}

Having characterized the single-period problem and its KKT structure, we now return to the multi-period setting in which renewable capacity accumulates over time. Let \(Q_t\) denote the installed renewable capacity at period \(t\). Let \(Q_{\mathrm{init}}:=Q_0\) denote the initial renewable capacity. For a utility operating under profit-neutral regulation, new renewable builds are added to an aggregate capacity stock and operated collectively.

\begin{remark}[Markovian structure and profit-neutral regulation]
The system admits a Markovian structure because both the state transition and the per-period feasibility constraints depend only on the current capacity state \(Q_t\). Given the transition rule
\[
    Q_{t+1} = Q_t + q_t
\]
and a per-period stage cost of \(-q_t\), the process \(\{Q_t\}_{t \ge 0}\) forms a controlled Markov process: feasibility and payoffs at time \(t\) depend on past decisions only through \(Q_t\).

Importantly, this Markovian property is reinforced by the \emph{profit-neutral, no-banking regulation} introduced earlier. Because both \emph{revenue adequacy} and \emph{renewable deliverability} must hold within each period, no additional state variables (such as accumulated monetary surplus/deficit or banked credits) carry over across periods. As a result, the system’s intertemporal dynamics are governed entirely by the physical state variable \(Q_t\), with no intertemporal financial coupling.
\end{remark}

\subsection{Myopic Policy Optimality under Monotone Reachability}

We now analyze the multi-period deployment problem directly through the evolution of the capacity state. Let
\[
\bar q(Q)
\;:=\;
\max\Big\{q\ge 0:\ \exists\,p\ge 0 \text{ such that }
D\!\left(\frac{p}{e(Q)}\right)\le f(Q),\ 
C(Q)+kq\le R(p,Q)\Big\}
\]
denote the maximal feasible one-step expansion at state \(Q\). Define the associated reachability map
\begin{equation}\label{eq:monotone_reach}
    S(Q)\;:=\;Q+\bar q(Q).
\end{equation}

The myopic policy is defined by
\[
q_t^{\mathrm{myo}}
\;=\;
\min\{\bar q(Q_t),\,Q^\star-Q_t\},
\]
that is, at each period the utility expands to the maximal feasible level without exceeding the terminal capacity \(Q^\star\).

\begin{theorem}[Myopic policy optimality under monotone reachability]
\label{thm:myopic_clean}
Assume feasible policies satisfy \(0\le q_t \le Q^\star-Q_t\) (\emph{No overbuild}) and the mapping \(S(Q)=Q+\bar q(Q)\) is non-decreasing on \([Q_{\mathrm{init}},Q^\star)\) (\emph{Monotone reachability}).
Then the myopic policy is global optimal, weakly dominates any feasible policy \(\mathcal P\): \begin{equation}
Q_t^{\mathrm{myo}} \;\ge\; Q_t^{\mathcal P}
\end{equation}
\end{theorem}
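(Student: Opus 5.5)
The argument is an induction on $t$ that compares state trajectories, with monotone reachability as the key tool. Both the myopic trajectory and any feasible trajectory $\mathcal P$ start from the same initial state $Q_{\mathrm{init}}$. The claim is that $Q_t^{\mathrm{myo}}\ge Q_t^{\mathcal P}$ for every $t$. Summing the transitions then gives $\sum_{t} q_t = Q_{n+1}-Q_{\mathrm{init}}$, so the myopic policy also maximizes the objective~\eqref{eq:opt_problem_multi_obj}. The base case is immediate, since both trajectories begin at $Q_{\mathrm{init}}$.

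For the inductive step, suppose $Q_t^{\mathrm{myo}}\ge Q_t^{\mathcal P}$. Feasibility of $\mathcal P$ at period $t$ means there is a price $p_t$ satisfying~\eqref{eq:opt_problem_multi_a}--\eqref{eq:opt_problem_multi_b} with expansion $q_t^{\mathcal P}$. By the definition of $\bar q$, this gives $q_t^{\mathcal P}\le \bar q(Q_t^{\mathcal P})$. Together with the no-overbuild constraint,
\[
Q_{t+1}^{\mathcal P}\le \min\{S(Q_t^{\mathcal P}),\,Q^\star\}.
\]
By construction, the myopic policy satisfies
\[
Q_{t+1}^{\mathrm{myo}}=\min\{S(Q_t^{\mathrm{myo}}),\,Q^\star\}.
\]
Since $Q_t^{\mathcal P}\le Q_t^{\mathrm{myo}}$ and both lie in $[Q_{\mathrm{init}},Q^\star]$, monotonicity of $S$ gives $S(Q_t^{\mathcal P})\le S(Q_t^{\mathrm{myo}})$. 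Because $x\mapsto\min\{x,Q^\star\}$ is non-decreasing, the inequality $Q_{t+1}^{\mathrm{myo}}\ge Q_{t+1}^{\mathcal P}$ follows.

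Two technical points need care. First, $S$ is only assumed monotone on $[Q_{\mathrm{init}},Q^\star)$, so the endpoint case must be handled separately. If $Q_t^{\mathrm{myo}}=Q^\star$, the myopic state stays at $Q^\star$, which by no-overbuild dominates any feasible state. If $Q_t^{\mathcal P}=Q^\star$, then $Q_t^{\mathrm{myo}}=Q^\star$ as well. Second, I must check that the myopic policy is itself feasible. Here I need $\bar q(Q)$ to be attained as a maximum, not merely a supremum. I expect this to be the main obstacle, though a mild one. I would invoke Theorem~\ref{thm:single_period_pstar} and Proposition~\ref{prop:general_pricing_reduction}. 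The feasible price set $\{p: D(p/e(Q))\le f(Q)\}=[p_{\min}(Q),\infty)$ is closed, and $R(\cdot,Q)$ is continuous with vanishing tail, so $\max_p R(p,Q)$ is attained at $p^*(Q)$. Hence
\[
\bar q(Q)=\max\{0,\,(R(p^*(Q),Q)-C(Q))/k\},
\]
which is attained by Corollary~\ref{cor:qstar}. Any $q\in[0,\bar q(Q)]$ is then also feasible with the same price, because the financial constraint is monotone in $q$. In particular, the truncated choice $\min\{\bar q,Q^\star-Q_t\}$ is feasible.

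Finally, I would note that the objective depends on the trajectory only through its terminal state $Q_{n+1}$ (or $Q_n$ plus the last addition). Weak dominance of the states at every $t$, including the terminal period, therefore yields
\[
\sum_t q_t^{\mathrm{myo}}\ge \sum_t q_t^{\mathcal P},
\]
which establishes global optimality.
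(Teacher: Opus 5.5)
Your proof is correct and follows essentially the same induction as the paper. You bound $Q_{t+1}^{\mathcal P}$ by $S(Q_t^{\mathcal P})$ using the definition of $\bar q$, push the inequality through the monotone maps $S$ and $x\mapsto\min\{x,Q^\star\}$, and use no-overbuild to drop the truncation; global optimality then follows from $\sum_t q_t = Q_{n+1}-Q_{\mathrm{init}}$, which is a cleaner version of the paper's ``weakly larger cumulative expansion at every horizon'' remark. Your separate handling of the endpoint $Q^\star$ (where monotonicity of $S$ is not assumed) and your check that $\bar q(Q)$ is attained, so the myopic policy is actually feasible, are details the paper's proof silently skips; they tighten the argument rather than change it.
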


\emph{Proof sketch.}
Consider any feasible policy $\mathcal P$ and compare the capacity trajectory it generates with that of the myopic policy. The goal is to show that the myopic trajectory weakly dominates the trajectory generated by any feasible policy, in the sense that renewable capacity under the myopic policy is always at least as large as that under $\mathcal P$ until the terminal capacity $Q^\star$ is reached. Formally, we establish by induction that
\begin{equation}
Q_t^{\mathrm{myo}} \;\ge\; Q_t^{\mathcal P}
\qquad
\text{for all } t \text{ prior to hitting } Q^\star.
\label{eq:dominance_clean}
\end{equation}

We begin with the initial period. At $t=0$, both trajectories start from the same initial capacity level, so that
\[
Q_0^{\mathrm{myo}} = Q_0^{\mathcal P} = Q_{\mathrm{init}}.
\]
Thus the dominance relation holds trivially at the starting point.

Next consider an arbitrary period $t$ and suppose that the dominance condition holds at that period, i.e., $Q_t^{\mathrm{myo}} \ge Q_t^{\mathcal P}$, while the myopic trajectory has not yet reached the terminal level $Q^\star$. We now examine how the two policies evolve to period $t+1$. Because policy $\mathcal P$ is feasible, the expansion decision $q_t^{\mathcal P}$ cannot exceed the maximal feasible expansion at the current state $Q_t^{\mathcal P}$. By definition of $\bar q(\cdot)$, this implies
\[
Q_{t+1}^{\mathcal P}
=
Q_t^{\mathcal P}+q_t^{\mathcal P}
\;\le\;
Q_t^{\mathcal P}+\bar q(Q_t^{\mathcal P})
=
S(Q_t^{\mathcal P}),
\]
where $S(Q)=Q+\bar q(Q)$ denotes the maximal reachable capacity in one step from state $Q$.

The myopic policy, by construction, expands to the largest feasible level in each period subject to the upper bound $Q^\star$. Therefore its state transition satisfies
\[
Q_{t+1}^{\mathrm{myo}}
=
\min\{S(Q_t^{\mathrm{myo}}),\,Q^\star\}.
\]

The key step follows from the induction hypothesis together with the monotonicity of the reachability function $S(\cdot)$. Since $Q_t^{\mathrm{myo}} \ge Q_t^{\mathcal P}$ and $S(\cdot)$ is nondecreasing, we obtain
\[
S(Q_t^{\mathrm{myo}})
\;\ge\;
S(Q_t^{\mathcal P}).
\]
Combining this inequality with the previous bounds yields
\[
Q_{t+1}^{\mathrm{myo}}
\;\ge\;
\min\{Q_{t+1}^{\mathcal P},\,Q^\star\}.
\]
Consequently, as long as the trajectory under policy $\mathcal P$ has not yet reached the terminal capacity (i.e., $Q_{t+1}^{\mathcal P} < Q^\star$), it follows that
\[
Q_{t+1}^{\mathrm{myo}} \ge Q_{t+1}^{\mathcal P}.
\]
This establishes the induction step and shows that the myopic trajectory weakly dominates any feasible trajectory until the terminal capacity $Q^\star$ is reached.

\subsection{Implications of Myopic Optimality}

We now state two immediate consequences of Theorem~\ref{thm:myopic_clean}. Both follow directly from the statewise dominance property established in the proof.

\begin{corollary}[Minimum hitting time]
\label{cor:hitting_time}
Under the assumptions of Theorem~\ref{thm:myopic_clean}, the myopic policy reaches the terminal capacity level \(Q^\star\) in the fewest periods among all feasible policies.
\end{corollary}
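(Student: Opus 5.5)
The plan is to obtain the corollary directly from the statewise dominance $Q_t^{\mathrm{myo}}\ge Q_t^{\mathcal P}$ in Theorem~\ref{thm:myopic_clean}, which holds for every period before $Q^\star$ is reached. For any feasible policy $\mathcal P$, define its hitting time
\[
\tau^{\mathcal P}:=\min\{t\ge 0:\ Q_t^{\mathcal P}=Q^\star\},
\]
with $\tau^{\mathcal P}=+\infty$ if the level is never attained. Define $\tau^{\mathrm{myo}}$ analogously. Under the no-overbuild assumption $0\le q_t\le Q^\star-Q_t$, every trajectory satisfies $Q_t\le Q^\star$, and each trajectory is nondecreasing because $q_t\ge 0$. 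Consequently, hitting $Q^\star$ is the same event as reaching $Q_t\ge Q^\star$, and once a trajectory reaches $Q^\star$ it stays there.

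The argument proceeds by contradiction. Suppose $\tau^{\mathcal P}<\tau^{\mathrm{myo}}$ for some feasible $\mathcal P$, and set $s:=\tau^{\mathcal P}$. Then $Q_s^{\mathcal P}=Q^\star$, while the myopic trajectory has not yet hit $Q^\star$ at any time $t\le s$. This is exactly the range of periods where the inductive dominance applies. The delicate point is the theorem's qualifier ``as long as $Q_{t+1}^{\mathcal P}<Q^\star$''. To handle it, I will rerun the induction step at the final transition $s-1\to s$ rather than simply cite the conclusion.

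From $Q_{s-1}^{\mathrm{myo}}\ge Q_{s-1}^{\mathcal P}$ and monotonicity of $S$, the transition rule gives
\[
Q_s^{\mathrm{myo}}=\min\{S(Q_{s-1}^{\mathrm{myo}}),Q^\star\}\ge \min\{S(Q_{s-1}^{\mathcal P}),Q^\star\}\ge \min\{Q_s^{\mathcal P},Q^\star\}=Q^\star.
\]
Combined with $Q_s^{\mathrm{myo}}\le Q^\star$, this forces $Q_s^{\mathrm{myo}}=Q^\star$, contradicting $s<\tau^{\mathrm{myo}}$. Hence $\tau^{\mathrm{myo}}\le\tau^{\mathcal P}$ for every feasible $\mathcal P$. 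If no feasible policy reaches $Q^\star$, the statement holds vacuously.

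The main obstacle is this boundary step. The theorem establishes dominance only while $\mathcal P$ stays strictly below $Q^\star$, so the final transition into $Q^\star$ must be checked separately using the capped form $\min\{S(\cdot),Q^\star\}$ of the myopic update. A secondary point is that monotonicity of $S$ is assumed only on $[Q_{\mathrm{init}},Q^\star)$. Both $Q_{s-1}^{\mathrm{myo}}$ and $Q_{s-1}^{\mathcal P}$ lie in this interval, since neither trajectory has hit $Q^\star$ by time $s-1$, so the monotonicity can legitimately be applied there.
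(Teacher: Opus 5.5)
Your proof is correct and takes the same route as the paper: evaluate the statewise dominance $Q_t^{\mathrm{myo}}\ge Q_t^{\mathcal P}$ at the period $T$ in which $\mathcal P$ reaches $Q^\star$, which gives $Q_T^{\mathrm{myo}}\ge Q^\star$. Your explicit handling of the last transition $s-1\to s$ through the capped update $\min\{S(\cdot),Q^\star\}$, with the check that both states lie in $[Q_{\mathrm{init}},Q^\star)$, makes precise a step the paper's sketch glosses over; the appendix proof of Theorem~\ref{thm:myopic_clean} already obtains dominance for all $t\ge 0$ from no-overbuild, and the case $s=0$ is immediate because both paths start at $Q_{\mathrm{init}}$.
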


\emph{Proof sketch.}
By statewise dominance,
\(
Q_t^{\mathrm{myo}} \ge Q_t^{\mathcal P}
\)
for all \(t\) prior to hitting \(Q^\star\).
Therefore, if a feasible policy \(\mathcal P\) reaches \(Q^\star\) at time \(T\), the myopic trajectory must satisfy
\(Q_T^{\mathrm{myo}} \ge Q_T^{\mathcal P} \ge Q^\star\),
implying that the myopic policy reaches \(Q^\star\) no later than \(T\).
\hfill\(\square\)

\begin{corollary}[Minimum cumulative emissions]
\label{cor:cumulative_emissions}
If the emissions intensity function \(e(Q)\) is nonincreasing in \(Q\), then among all feasible policies the myopic policy minimizes cumulative emissions up to the hitting time of \(Q^\star\).
\end{corollary}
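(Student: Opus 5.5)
The proof reduces to the statewise dominance already established in Theorem~\ref{thm:myopic_clean}, combined with monotonicity of \(e(\cdot)\). The first step is to fix the emissions functional. Cumulative emissions up to a horizon \(T\) are taken as \(E^{\mathcal P}(T)=\sum_{t=0}^{T} L_t\,e(Q_t^{\mathcal P})\), or simply \(\sum_t e(Q_t^{\mathcal P})\), where \(L_t\ge 0\) is the exogenous load (energy served) in period \(t\). The load does not depend on the policy, since the demand and primitives are time-invariant and decisions affect emissions only through the state \(Q_t\). This representation is consistent with \(e(Q)\) being grid-average intensity. It must be stated explicitly, because the proof needs emissions to be a sum of nonnegative, policy-independent weights times a nonincreasing function of the state.

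Next I compare the myopic policy with an arbitrary feasible policy \(\mathcal P\) over a common horizon. Let \(T^{\mathrm{myo}}\) be the myopic hitting time of \(Q^\star\). By Corollary~\ref{cor:hitting_time}, \(T^{\mathrm{myo}}\le T^{\mathcal P}\). For each \(t\le T^{\mathrm{myo}}\), the induction in Theorem~\ref{thm:myopic_clean} gives \(Q_t^{\mathrm{myo}}\ge Q_t^{\mathcal P}\). This holds for all \(t\) before the myopic path hits \(Q^\star\). At the hitting time itself it also holds, because no-overbuild gives \(Q_t^{\mathcal P}\le Q^\star=Q_t^{\mathrm{myo}}\). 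Since \(e\) is nonincreasing, \(e(Q_t^{\mathrm{myo}})\le e(Q_t^{\mathcal P})\) termwise, and multiplying by \(L_t\ge 0\) and summing gives \(E^{\mathrm{myo}}\le E^{\mathcal P}\) over \([0,T^{\mathrm{myo}}]\).

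If the comparison is instead meant up to \(\mathcal P\)'s own hitting time, or over any fixed horizon \(T\), I extend the argument past \(T^{\mathrm{myo}}\). For \(t>T^{\mathrm{myo}}\), the myopic state is frozen at \(Q^\star\), since \(q^{\mathrm{myo}}_t=\min\{\bar q,Q^\star-Q_t\}=0\). Every feasible state satisfies \(Q_t^{\mathcal P}\le Q^\star\) by no-overbuild. Hence \(Q_t^{\mathrm{myo}}\ge Q_t^{\mathcal P}\) for all \(t\), so termwise dominance holds on every horizon and the inequality follows for any choice of stopping index.

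The main obstacle is precisely this extension: the induction in Theorem~\ref{thm:myopic_clean} is only stated up to the hitting time. The fix is the no-overbuild bound \(Q^{\mathcal P}_t\le Q^\star\), together with absorption of the myopic path at \(Q^\star\). The rest is a termwise comparison, using only monotonicity of \(e\); no differentiability is needed, so the weaker hypothesis "nonincreasing" suffices.
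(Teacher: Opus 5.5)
Your proposal is correct and follows the paper's argument: statewise dominance $Q_t^{\mathrm{myo}}\ge Q_t^{\mathcal P}$ from Theorem~\ref{thm:myopic_clean}, a termwise comparison $e(Q_t^{\mathrm{myo}})\le e(Q_t^{\mathcal P})$ from monotonicity of $e$, and then summation. Two of your additions are refinements of details the paper's sketch leaves implicit, not a different route: the explicit emissions functional with policy-independent load weights, which is an added modeling assumption, and the no-overbuild extension past the myopic hitting time.
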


\emph{Proof sketch.}
From statewise dominance,
\(Q_t^{\mathrm{myo}} \ge Q_t^{\mathcal P}\) prior to hitting \(Q^\star\).
Since \(e(Q)\) is nonincreasing, it follows that
\(e(Q_t^{\mathrm{myo}}) \le e(Q_t^{\mathcal P})\) period by period.
Summing over time yields weakly lower cumulative emissions under the myopic policy.
\hfill\(\square\)

\paragraph{Policy significance.}
Theorem~\ref{thm:myopic_clean} and its corollaries deliver a simple operational rule for a utility operating under profit-neutral regulation: \emph{build to the maximum feasible level each period}. Pricing decisions need only ensure feasibility of this maximal build; no intertemporal smoothing or long-horizon forecasting is required. In practice, this shifts managerial focus from solving dynamic optimization problems to identifying and relaxing feasibility bottlenecks, such as interconnection limits, siting constraints, procurement processes, and regulatory approvals, that restrict \(\bar q(Q)\).

\section{Revenue Sharing under Separated Financial Constraints}\label{sec:revenue_sharing}

We now consider a decision problem under an \emph{unbundled utility structure}, also known as as \emph{Independent Power Producers (IPPs)} business model, in which renewable generation and utility system operation are financially separated. In this setting, renewable generators operate as independent entities, while the utility administers the VRP, sets the premium price, and invests in system-level integration and expansion. Unlike the vertically integrated benchmark, the utility no longer internalizes generation costs on a single balance sheet; instead, financial feasibility must be satisfied separately for the operator and the generators.

\subsection{Single-Period Optimization with Revenue Sharing}

To capture this institutional separation, we introduce an explicit revenue-sharing decision variable \(\gamma \in [0,1]\), which determines how total VRP revenue is allocated between the two parties. Specifically, a fraction \(\gamma\) of total program revenue is transferred to renewable generators, while the remaining fraction \(1-\gamma\) is retained by the operator. Importantly, \(\gamma\) is not a market outcome but a \emph{policy-controlled parameter}, reflecting contract design or regulatory rules governing revenue allocation within the VRP. Recall the two cost aggregates
\[
C_1(Q,q):=C_S(Q)+k\,q,
\qquad
C_2(Q):=C_R(Q)-f(Q)\pi(Q),
\]
where all primitives are summarized in Table~\ref{table:notations}.
The single-period optimization problem under revenue sharing is:
\begin{subequations}\label{eq:opt_problem_single_rs}
\begin{align}
    \max_{p, q, \gamma} \quad & q \label{eq:opt_problem_single_rs_obj} \\[4pt]
    \text{s.t.} \quad 
    & D\!\left(\frac{p}{e(Q)}\right) \le f(Q), \label{eq:opt_problem_single_rs_a} \\[4pt]
    & C_1(Q,q) \le (1-\gamma)\,R(p,Q), \label{eq:opt_problem_single_rs_b} \\[4pt]
    & C_2(Q) \le \gamma\,R(p,Q), \label{eq:opt_problem_single_rs_c} \\[4pt]
    & q \ge 0,\quad p \ge 0,\quad \gamma \in [0,1]. \label{eq:opt_problem_single_rs_d}
\end{align}
\end{subequations}

Constraint~\eqref{eq:opt_problem_single_rs_a} enforces renewable deliverability: renewable credits sold by the program must be backed by delivered renewable electricity in the same period. Constraint~\eqref{eq:opt_problem_single_rs_b} ensures that the operator’s retained share of program revenue covers system costs and any new investment undertaken in the period. Constraint~\eqref{eq:opt_problem_single_rs_c} guarantees generator viability by requiring that the transferred program revenue cover the generator-side net cost \(C_2(Q)\).

Relative to the integrated benchmark, the key difference is the explicit separation of financial constraints through \(\gamma\). This separation introduces additional flexibility in revenue allocation but does not, as we show next, alter the optimal pricing signal or the long-run capacity limit. Instead, \(\gamma\) governs how program revenue is distributed between generator support and system expansion, making it the primary policy lever in unbundled utility environments.

\subsection{Optimal Revenue Sharing Policy}

Introducing revenue sharing changes the institutional structure of the problem, but it need not distort the underlying market-based pricing signal or the expansion outcome. In this subsection, we characterize the operator’s optimal revenue-sharing rule and show that, whenever the revenue-sharing choice is interior, the unbundled problem collapses to the integrated benchmark.

\begin{proposition}[Optimal revenue-sharing rule]
\label{prop:gamma_star}
Fix a state \(Q\) and suppose the optimal solution exhibits strictly positive expansion \(q^*(Q)>0\). Let \(p^*(Q)\) denote the optimal price in the integrated formulation, and let \(R^*(Q):=R(p^*(Q),Q)\) denote the corresponding program revenue. Then an optimal revenue share is given by
\begin{equation}
\gamma^*(Q)=\max\left\{0,\frac{C_2(Q)}{R^*(Q)}\right\}.
\label{eq:gamma_star}
\end{equation}
In particular, if \(C_2(Q)\le 0\) (generator surplus from the energy market), then \(\gamma^*(Q)=0\). If \(C_2(Q)>0\) and \(\gamma^*(Q)\in(0,1)\), then the generator viability constraint binds at the optimum:
\[
C_2(Q)=\gamma^*(Q)\,R^*(Q).
\]
\end{proposition}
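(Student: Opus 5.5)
The approach is to reduce the revenue-sharing problem \eqref{eq:opt_problem_single_rs} to the integrated problem \eqref{eq:opt_problem_single} by aggregation, and then choose the smallest $\gamma$ that keeps the generator viable.

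\textbf{Step 1 (relaxation bound).} Summing \eqref{eq:opt_problem_single_rs_b} and \eqref{eq:opt_problem_single_rs_c} gives $C_S(Q)+kq+C_2(Q)\le R(p,Q)$, which is exactly \eqref{eq:opt_problem_single_b} with $C(Q)=C_S(Q)+C_2(Q)$. The deliverability constraint \eqref{eq:opt_problem_single_rs_a} coincides with \eqref{eq:opt_problem_single_a}. Hence every feasible $(p,q,\gamma)$ of the unbundled problem projects to a feasible $(p,q)$ of the integrated problem. Its optimal value is therefore at most $q^*(Q)$ from Corollary~\ref{cor:qstar}, and by Theorem~\ref{thm:single_period_pstar} the associated revenue is at most $R^*(Q)$.

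\textbf{Step 2 (attainment).} Take $p=p^*(Q)$ and $q=q^*(Q)=(R^*(Q)-C(Q))/k$, and set $\gamma^*=\max\{0,C_2(Q)/R^*(Q)\}$. Here $R^*(Q)>0$ because $q^*>0$ forces $R^*>C(Q)\ge C_S(Q)+C_2(Q)$, and $C_S\ge0$. We then check each constraint.
\begin{itemize}
\item \eqref{eq:opt_problem_single_rs_c} holds: if $C_2\le 0$ it is immediate with $\gamma=0$; if $C_2>0$ it holds with equality.
\item $\gamma^*\le 1$: when $C_2>0$ we have $C_2\le C_S+C_2<R^*$, so $\gamma^*<1$.
\item \eqref{eq:opt_problem_single_rs_b}: when $C_2>0$, $(1-\gamma^*)R^*=R^*-C_2=C_S+kq^*$, so the constraint binds. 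When $C_2\le0$, $\gamma^*=0$ and $R^*=C_S+C_2+kq^*\le C_S+kq^*$. This last inequality goes the wrong way, so this case needs a separate argument.
\end{itemize}

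\textbf{Main obstacle: the case $C_2(Q)<0$.} When the generator has a surplus, the integrated bound of Step 1 uses that surplus to fund expansion. The unbundled problem cannot do this unless negative $\gamma$ is allowed. So the unbundled optimum with $\gamma\in[0,1]$ may fall strictly below $q^*(Q)$. I would therefore treat this case directly rather than through Step 1. With $C_2\le0$, constraint \eqref{eq:opt_problem_single_rs_c} holds for every $\gamma\ge0$. Since \eqref{eq:opt_problem_single_rs_b} is monotone decreasing in $\gamma$, the optimum takes $\gamma=0$. The problem then reduces to maximizing $q$ subject to $C_S+kq\le R(p,Q)$ together with deliverability. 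By the argument of Proposition~\ref{prop:general_pricing_reduction}, the optimal $p$ is still $p^*(Q)$, and $q=(R^*-C_S)/k$. This confirms $\gamma^*=0$ is optimal (the revenue-maximizing price is unchanged because the feasible price set is the same).

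\textbf{The case $C_2(Q)>0$.} Steps 1–2 show $\gamma^*$ attains the integrated bound, so it is optimal. For the binding claim, I would argue by contradiction: if $C_2<\gamma R$ at an optimum with $\gamma$ interior, then lowering $\gamma$ slightly raises the slack in \eqref{eq:opt_problem_single_rs_b}. This permits a strictly larger $q$, contradicting optimality. Hence $C_2(Q)=\gamma^*(Q)R^*(Q)$.
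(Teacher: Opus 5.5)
Your proof is correct, and for $C_2(Q)>0$ it takes a genuinely different route from the paper's. The paper argues entirely through the KKT system of the separated-accounts problem. From $q^*>0$ it gets $\mu^*=1/k$. Then, \emph{assuming} $\gamma^*\in(0,1)$, stationarity in $\gamma$ forces $\theta^*=\mu^*>0$, so complementary slackness makes the generator constraint bind. The formula then follows by evaluating at the integrated price $p^*(Q)$.

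Your sandwich argument has two parts. Summing the two account constraints shows the integrated problem is a relaxation. The triple $(p^*(Q),q^*(Q),C_2(Q)/R^*(Q))$ is feasible for the unbundled problem and attains the relaxed optimum. This proves optimality directly rather than deriving a necessary condition, which buys three things:
\begin{itemize}
\item It needs no constraint qualification for the bilinear constraint $C_2\le\gamma R(p,Q)$, which invoking KKT multipliers at the optimum tacitly requires.
\item It derives $0<\gamma^*<1$ from $C_S\ge 0$ and $kq^*>0$ instead of assuming interiority.
\item It justifies identifying the unbundled optimal price with $p^*(Q)$, which the paper's proof takes for granted. As a result you essentially obtain part~1 of Proposition~\ref{prop:interior_equivalence} for free.
\end{itemize}

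Your observation that for $C_2<0$ the unbundled optimum $(R^*-C_S)/k$ lies strictly below the integrated $q^*(Q)$ is correct and sharper than anything the paper states. Your monotonicity-in-$\gamma$ argument for $C_2\le 0$ coincides with the paper's Case~1. In exchange, the KKT route delivers the shadow values $\mu^*=\theta^*=1/k$ of the two budget accounts. These feed the paper's marginal-value interpretation of the phases.

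Two small tidy-ups:
\begin{itemize}
\item For $C_2<0$, positivity of the \emph{integrated} $q^*(Q)$ does not guarantee $R^*\ge C_S$, so there the hypothesis should be read as positivity (or at least feasibility) of the unbundled optimum.
\item Your closing contradiction argument yields $C_2=\gamma R(p,Q)$ at an arbitrary optimum with $\gamma>0$. Replacing $R(p,Q)$ by $R^*$ uses that every unbundled optimum is revenue-maximizing. This follows from your Steps~1--2 together with Proposition~\ref{prop:general_pricing_reduction}, and should be said; for your constructed optimum, binding holds by construction anyway.
\end{itemize}
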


\emph{Sketch of proof.}
The result follows from the KKT conditions of \eqref{eq:opt_problem_single_rs}. If \(C_2(Q)\le 0\), the generator viability constraint is slack at \(\gamma=0\), so allocating any positive share to generators weakly reduces the operator’s retained revenue and cannot increase expansion, implying \(\gamma^*(Q)=0\). Otherwise, when \(C_2(Q)>0\) and \(\gamma\) is interior, complementary slackness implies the generator viability constraint binds, yielding \(\gamma=C_2(Q)/R(p,Q)\). Evaluating at the expansion-maximizing price \(p^*(Q)\) gives \eqref{eq:gamma_star}. \hfill\(\square\)

The rule above identifies an expansion-maximizing revenue share for the single-period separated-accounts problem: conditional on feasibility, any alternative \(\gamma\) cannot increase the attainable expansion \(q\). We next show that, in the interior regime \(\gamma^*(Q)\in(0,1)\), the two binding financial constraints aggregate to the integrated feasibility constraint, implying the same optimal pricing and expansion outcomes as in the integrated benchmark.

\begin{remark}
The boundary case \(\gamma^*(Q)=1\) is excluded by feasibility. Since the utility bears strictly positive institutional cost in administering the VRP program, allocating all revenue to generators would leave zero retained revenue for the operator in \eqref{eq:opt_problem_single_rs_b}, making the operator-side constraint infeasible. Therefore, any feasible solution must satisfy \(\gamma^*(Q)<1\).
\end{remark}

\begin{proposition}[Equivalence to the integrated benchmark under interior revenue sharing]
\label{prop:interior_equivalence}
Fix \(Q\) and suppose the optimal revenue-sharing choice is interior, i.e., \(\gamma^*(Q)\in(0,1)\). Then:
\begin{enumerate}
    \item The optimal price and expansion under \eqref{eq:opt_problem_single_rs} coincide with the integrated benchmark solution, i.e., \(p^{\mathrm{RS}}(Q)=p^*(Q)\) and \(q^{\mathrm{RS}}(Q)=q^*(Q)\).
    \item Consequently, all analytical results derived under the integrated benchmark remain valid, including the single-period closed-form solution (Theorem~\ref{thm:single_period_pstar}), the long-run capacity limit characterization (Theorem~\ref{thm:long_run_capacity}), the multi-period formulation, and the global optimality of myopic policies (Theorem~\ref{thm:myopic_clean}), provided the interior condition holds along the realized capacity path.
\end{enumerate}
\end{proposition}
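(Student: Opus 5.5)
The plan is to show that the feasible set of the separated problem \eqref{eq:opt_problem_single_rs}, once projected onto $(p,q)$ by eliminating $\gamma$, is exactly the feasible set of the integrated problem \eqref{eq:opt_problem_single}. Equality of the two projected feasible sets gives equal optimal values. Uniqueness of the price in Theorem~\ref{thm:single_period_pstar} then gives the coincidence of optimizers.

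\textbf{Separated feasible implies integrated feasible.} Take any $(p,q,\gamma)$ feasible for \eqref{eq:opt_problem_single_rs}. Adding \eqref{eq:opt_problem_single_rs_b} and \eqref{eq:opt_problem_single_rs_c} gives
\[
C_1(Q,q)+C_2(Q)=C(Q)+kq\le R(p,Q).
\]
Together with \eqref{eq:opt_problem_single_rs_a}, this shows $(p,q)$ is feasible for \eqref{eq:opt_problem_single}. Hence $q^{\mathrm{RS}}(Q)\le q^*(Q)$.

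\textbf{Integrated feasible implies separated feasible.} Take $(p,q)$ feasible for \eqref{eq:opt_problem_single} with $R(p,Q)>0$. This holds at $p^*$, since $q^*>0$ forces $R^*\ge C(Q)+kq^*$, and nonnegativity of the relevant costs gives $R^*>0$. Set $\gamma=\max\{0,C_2(Q)/R(p,Q)\}$, which is the rule of Proposition~\ref{prop:gamma_star}.

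If $C_2(Q)>0$, then \eqref{eq:opt_problem_single_rs_c} holds with equality. Constraint \eqref{eq:opt_problem_single_rs_b} becomes $C_1\le R-C_2$, which is exactly the integrated constraint. We also need $\gamma\in[0,1]$, which holds because $C_2\le R-C_1\le R$, using $C_1\ge 0$.

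If $C_2(Q)\le 0$, take $\gamma=0$. Constraint \eqref{eq:opt_problem_single_rs_c} is trivial. Constraint \eqref{eq:opt_problem_single_rs_b} requires $C_1\le R$, which follows from $C_1\le R-C_2$ since $-C_2\ge 0$.

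Applying this construction at $(p^*,q^*)$ gives $q^{\mathrm{RS}}\ge q^*$. Combined with the first step, $q^{\mathrm{RS}}(Q)=q^*(Q)$.

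\textbf{Which case the interior hypothesis selects, and pinning down the price.} The hypothesis $\gamma^*\in(0,1)$ places us in the first case ($C_2(Q)>0$). Any RS-optimal $(p,q,\gamma)$ projects to an integrated-optimal $(p,q)$, because it is integrated-feasible and attains the common value. Positive expansion then forces the aggregated constraint to bind with maximal revenue, as in Proposition~\ref{prop:general_pricing_reduction}. Uniqueness in Theorem~\ref{thm:single_period_pstar} therefore gives $p^{\mathrm{RS}}=p^*$.

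The main subtlety, and the step I expect to need care, is whether both separated constraints actually bind at the optimum. The argument above uses only aggregation, so it does not depend on this. Still, I would check it via complementary slackness. If $C_2<\gamma R$ strictly, lowering $\gamma$ relaxes \eqref{eq:opt_problem_single_rs_b} and allows larger $q$, contradicting optimality. So the interior $\gamma^*$ equals $C_2/R^*$, consistent with Proposition~\ref{prop:gamma_star}.

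\textbf{Part 2.} This follows by applying part 1 statewise. The RS per-state solution map $Q\mapsto(p^{\mathrm{RS}},q^{\mathrm{RS}})$ equals the integrated one wherever the interior condition holds. Consequently the maximal feasible expansion $\bar q(Q)$, the reachability map $S(Q)$, and the stopping condition $q^*(Q)=0$ are unchanged along the realized path. Theorems~\ref{thm:long_run_capacity} and \ref{thm:myopic_clean} then transfer verbatim.

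One point needs care here. The interior condition is stated only along the realized path, but $\bar q$ at non-path states appears in the dominance argument of Theorem~\ref{thm:myopic_clean}. I would handle this by noting that the first inclusion above ($\bar q^{\mathrm{RS}}\le\bar q$) holds at every state. So competing RS policies are dominated by integrated-feasible ones, which is all the induction in Theorem~\ref{thm:myopic_clean} requires.
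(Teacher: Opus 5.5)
Your argument is valid for the interior case, but it takes a different route from the paper.

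\textbf{Comparison of routes.} The paper stays inside the KKT system of the separated problem. From $q^*>0$ it gets $\mu^*=1/k$. Interiority removes the multipliers on $\gamma\in[0,1]$, and $\gamma$-stationarity then forces $\theta^*=\mu^*>0$. So both account constraints bind, and their sum is the integrated constraint at the optimum; the price and multi-period claims are then asserted to carry over. You instead eliminate $\gamma$. You show that when $C_2(Q)>0$, the projection of the separated feasible set onto $(p,q)$ equals the integrated feasible set: every integrated-feasible point has $R\ge C_1+C_2>0$, and $\gamma=C_2/R\in(0,1]$ is admissible there.

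Your route buys several things:
\begin{itemize}
\item It needs no multipliers or constraint qualification.
\item It gives equality of optimal values without assuming $q^*>0$.
\item It supplies the reverse inclusion that the paper's Step~3 uses only tacitly. To rule out a higher-revenue deliverable price in the separated problem, one must know that an admissible $\gamma$ exists at that price.
\item Your Part~2 is also more careful than the paper's. The dominance induction of Theorem~\ref{thm:myopic_clean} evaluates $S$ at the competing policy's states, which need not lie on the realized path. Your everywhere-valid inequality $\bar q^{\mathrm{RS}}\le\bar q$ is exactly what closes that hole.
\end{itemize}
What the KKT route buys in return is the identity $\theta^*=\mu^*=1/k$: a marginal dollar is equally valuable in either account. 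This feeds the paper's marginal-value and phase interpretation and is shared with the proof of Proposition~\ref{prop:gamma_star}.

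\textbf{An incorrect step.} One step is wrong, although the statement does not need it. In your case $C_2(Q)\le 0$ you infer $C_1\le R$ from $C_1\le R-C_2$ and $-C_2\ge 0$. The implication runs the other way, since $R-C_2\ge R$. For $C_2(Q)<0$ the separated problem is strictly tighter, because with $\gamma\ge 0$ the generators' energy-market surplus cannot be passed to the operator. Writing $R^*$ for maximal deliverable revenue, every separated-feasible $q$ obeys $q\le (R^*-C_S)/k<(R^*-C)/k$. So the separated optimum falls strictly short of the integrated one; this is the paper's Phase~1. Hence your sentence concluding $q^{\mathrm{RS}}(Q)=q^*(Q)$, written before interiority is invoked, is false in general.

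\textbf{The fix.} Reorder the argument:
\begin{itemize}
\item First observe that an interior $\gamma^*$ forces $C_2(Q)>0$. If $C_2\le 0$, lowering $\gamma$ to $0$ preserves generator viability and strictly relaxes the operator constraint when $R>0$, so an interior $\gamma$ cannot be optimal.
\item Only then run your first case.
\end{itemize}
This also repairs your justification of $R^*>0$. ``Nonnegativity of the relevant costs'' does not bound $C(Q)$ from below in general, since $C(Q)$ can be negative. Once $C_2>0$, however, $C=C_S+C_2>0$ and hence $R^*\ge C+kq^*>0$. With that ordering, your projection equivalence holds at every state with $C_2(Q)\ge 0$, which is the natural scope for Part~2.
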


\emph{Sketch of proof.}
When \(\gamma^*(Q)\in(0,1)\) and \(q^*(Q)>0\), complementary slackness implies that both separated financial constraints bind at the optimum:
\[
C_1(Q,q)=(1-\gamma)\,R(p,Q),
\qquad
C_2(Q)=\gamma\,R(p,Q).
\]
Adding these equalities yields
\[
C_1(Q,q)+C_2(Q)=R(p,Q),
\]
which is exactly the integrated financial feasibility constraint. Since the renewable deliverability constraint is identical in both formulations, the unbundled single-period problem reduces to the integrated benchmark. Therefore, the same revenue-maximizing pricing rule applies, and the implied expansion (given by the binding integrated constraint) coincides as well. The multi-period and myopic optimality results follow because the per-period feasible set and induced expansion mapping \(\bar q(Q)\) are unchanged whenever the reduction holds. \hfill\(\square\)

\subsection{Phase Transition Dynamics}

We now examine how the optimal revenue-sharing rule translates into distinct operational regimes as renewable penetration increases. 
Because pricing is already determined by the integrated benchmark, the only remaining policy lever is the allocation of total program revenue between generator support and system expansion.

Let \(R^*(Q):=R(p^*(Q),Q)\) denote total program revenue at the optimal price.

\begin{corollary}[Expansion under revenue sharing]
\label{lem:q_gamma_clean}
Fix a state \(Q\) and suppose the optimal price \(p^*(Q)\) is applied.
If expansion is positive and both financial constraints bind, the operator-funded expansion level is given by
\begin{equation}
q(\gamma;Q)
=
\frac{(1-\gamma)R^*(Q)-C_S(Q)}{k}.
\label{eq:q_gamma}
\end{equation}
In particular, expansion is strictly decreasing in \(\gamma\).
\end{corollary}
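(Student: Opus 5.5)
The plan is to solve the operator-side financial constraint \eqref{eq:opt_problem_single_rs_b} directly for \(q\), with the price fixed at \(p^*(Q)\). Fix \(Q\) and set \(p=p^*(Q)\). The program revenue is then the constant \(R^*(Q)=R(p^*(Q),Q)\), which by Theorem~\ref{thm:single_period_pstar} is finite and does not depend on \(\gamma\) or \(q\). Constraint \eqref{eq:opt_problem_single_rs_b} reads \(C_S(Q)+kq\le(1-\gamma)R^*(Q)\). The hypothesis says this constraint binds. This is consistent with the argument for Proposition~\ref{prop:gamma_star}: with \(q>0\) being maximized, any slack on the operator side could be converted into more \(q\). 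Under binding we have the equality \(C_S(Q)+kq=(1-\gamma)R^*(Q)\). Assuming \(k>0\), which is implicit in Corollary~\ref{cor:qstar} since it divides by \(k\), we solve for \(q\) and get \eqref{eq:q_gamma}.

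For monotonicity, \(q(\gamma;Q)\) is affine in \(\gamma\), with slope \(-R^*(Q)/k\). So it suffices to show \(R^*(Q)>0\). Positivity follows from the closed form. In either regime of \eqref{eq:thm1_pstar} the price is \(p^*(Q)>0\), provided \(e(Q)>0\) and, in the second regime, \(f(Q)<M\). This last condition holds because \(D(1/\epsilon)>f(Q)\) there and \(D(1/\epsilon)<M\). Demand \(D(\cdot)=M\exp(-\epsilon p/e(Q))\) is also strictly positive. Hence \(R^*(Q)=p^*D>0\).

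An alternative route to positivity uses the hypothesis \(q>0\) itself. Binding gives \((1-\gamma)R^*(Q)=C_S(Q)+kq>0\), and since \(C_S\ge 0\) and \(\gamma<1\) by the preceding Remark, this forces \(R^*(Q)>0\).

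The main subtle point is interpretive rather than computational. Once the binding constraint is imposed, the expansion level is fully determined by \(\gamma\). The generator-side constraint \eqref{eq:opt_problem_single_rs_c} binding is what pins \(\gamma\) down to \(C_2(Q)/R^*(Q)\), as in Proposition~\ref{prop:gamma_star}. However, the formula \eqref{eq:q_gamma} only needs the operator-side equality. I would therefore state clearly that strict decrease in \(\gamma\) refers to the map \(\gamma\mapsto q(\gamma;Q)\) over admissible \(\gamma\in[0,1)\) with \(q\ge 0\). This strict decrease is what justifies choosing the smallest feasible \(\gamma\) in \eqref{eq:gamma_star}.

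As a final consistency check, I would substitute \(\gamma=C_2/R^*\) into \eqref{eq:q_gamma}. This gives \((R^*-C_S-C_2)/k=(R^*-C)/k\), which recovers \eqref{eq:thm1_qstar} and matches Proposition~\ref{prop:interior_equivalence}.
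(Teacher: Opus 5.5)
Your proposal is correct and follows the same route as the paper: set the operator-side constraint to equality at $p^*(Q)$, solve for $q$, and read off monotonicity from the slope $-R^*(Q)/k$. Your explicit check that $k>0$ and $R^*(Q)>0$, via the closed form or via $q>0$ in the binding constraint, fills in what the paper treats as immediate.
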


\emph{Proof sketch.}
When expansion is positive, the operator’s budget constraint binds:
\[
C_S(Q)+kq=(1-\gamma)R^*(Q).
\]
Solving for \(q\) yields \eqref{eq:q_gamma}. Monotonicity in \(\gamma\) follows immediately.
\hfill\(\square\)

Corollary~\ref{lem:q_gamma_clean} formalizes a simple economic principle: program revenues must first cover any renewable revenue shortfall; only the residual can finance new capacity additions. Since the objective is to maximize \(q\), the optimal revenue-sharing rule allocates the minimum share required to ensure generator viability, directing all remaining revenue toward expansion.

\paragraph{Phase structure.}
As renewable capacity increases, the binding patterns of the financial constraints generate three qualitatively distinct regimes:

\begin{itemize}
    \item \textbf{Phase 1: Spontaneous expansion (\(\gamma=0,\; q>0\)).}  
    When renewable generators are viable from wholesale-market revenue alone (\(C_2(Q)\le 0\)), no revenue transfer is required. All program revenue is retained by the operator and directed toward expansion. This phase typically occurs at low penetration levels, when energy revenue and emission differentials remain high.

    \item \textbf{Phase 2: Revenue-supported expansion (\(\gamma>0,\; q>0\)).}  
    As renewable penetration increases, declining wholesale prices and rising curtailment reduce generator net revenue. A positive revenue share becomes necessary to maintain viability. Expansion continues, but a growing portion of program revenue is absorbed by operating support rather than new builds.

    \item \textbf{Phase 3: Long-run equilibrium (\(\gamma>0,\; q=0\)).}  
    Eventually, total program revenue is just sufficient to cover operating costs. No residual remains to finance additional capacity, and expansion ceases. The system reaches the long-run capacity limit characterized earlier.
\end{itemize}

These regimes arise endogenously from the interaction between declining emission intensity, changing wholesale-market revenues, and the revenue-neutral constraint. 

\paragraph{Policy interpretation.}
The phase structure highlights an important timing implication for VRPs. In early stages, when \(\gamma=0\), the program generates its largest expansion effect because all program revenue finances new capacity. As penetration rises, increasing revenue must be diverted to sustain existing generation, reducing marginal expansion. This dynamic underscores the importance of early program deployment: voluntary mechanisms are most effective when renewable penetration remains low and emission differentiation is large.

\section{Numerical Illustration}

\subsection{ISO-NE Test System}

The numerical illustration adopts the 8-bus ISO-NE (Independent System Operator–New England) test system, a standard configuration widely used in power-system economics to benchmark dispatch and market mechanisms. The system consists of eight nodes, twelve transmission lines, and seventy-six thermal generators, with a total installed capacity of approximately 23~GW and an average system load of 13~GW. Transmission congestion in the ISO-NE system is generally rare and limited, reflecting the region’s strong interconnection and centralized coordination under ISO-level management. Because bulk transmission expansion is planned and operated by the ISO rather than by local utilities, our framework focuses on the locational marginal price of a \emph{single-bus representation} from the 8-bus system to capture the economic behavior of an individual utility within its local service territory. This abstraction isolates the retail-facing renewable program from inter-zonal transmission dynamics while preserving realistic nodal prices and system conditions.

Within this localized utility framework, renewable operating costs \(C_R(Q)\) are represented by the operation and maintenance (O\&M) expenses of renewable generators. The unit investment cost of incremental renewable additions is captured by \(k_t\) (or \(k\) in a stationary parameterization), while the operator’s system-side cost \(C_S(Q)\) primarily reflects expenditures associated with battery energy storage deployment and operation, which serve as the main flexibility resources supporting renewable integration and reliability. To capture the increasing marginal difficulty of expansion, we impose moderate convexity on both \(C_R(Q)\) and \(C_S(Q)\), reflecting factors such as higher siting and interconnection costs for remote resources, the need for enhanced control infrastructure, and growing balancing costs as renewable and storage capacities expand. These assumptions ensure that the simulated cost structure realistically mirrors the economic challenges faced by utilities scaling renewables under local operational constraints. The detailed parameterization and quantitative assumptions are presented in Appendix~B.


We now quantify the monotone reachability condition in equation~\ref{eq:monotone_reach}:
\[
S(Q)\;=\;Q+\frac{\frac{M}{e\,\epsilon}e(Q)-C(Q)}{\lambda}.
\]
If $S$ is differentiable, the monotone reachability requirement $S'(Q)\ge 0$ is equivalent to
\begin{equation}
\label{eq:mono_reachability_primitives}
1+\frac{1}{\lambda}\Big(\frac{M}{e\,\epsilon}e'(Q)-C'(Q)\Big)\;\ge\;0,
\qquad \forall Q\in[Q_0,Q^*).
\end{equation}

It shall be acknowledged that emission reduced with renewable capacity \(e'(Q)<0\) and generally non-investment cost increases with renewable capacity \(C'(Q)>0\), so the monotone reachability condition is not holding automatically without quantification check. In our ISO-NE simulation, we use the parameterization \(M=10\), \(\epsilon=0.0045\), and \(k=1000\).%
\footnote{All monetary units are consistent with the model formulation; see Appendix~B for full scaling and unit conversions.}
We also compute uniform upper bounds on the relevant derivatives over the simulated state range (e.g., \(\max|e'(Q)|<0.1\) and \(\max|C'(Q)|<150\)). Under this calibration, the sufficient condition for monotone reachability is satisfied with slack:
\[
1+\frac{1}{\lambda}\Big(\frac{M}{e\,\epsilon}e'(Q)-C'(Q)\Big)\ \ge1+\frac{1}{k}\left(-\frac{M}{e\,\epsilon}\,\max|e'(Q)|-\max|C'(Q)|\right)=0.768>0.
\]
Therefore, the monotone reachability condition holds, implying that the myopic policy is globally optimal for the ISO-NE calibration. Although myopic optimality is not automatic, it is empirically plausible under realistic power-system parameterizations.

\subsection{Numerical Calibration and Result Interpretation}

We calibrate VRP demand using an optimistic but empirically grounded assumption consistent with recent projections for voluntary market growth. Assume approximately one-third of total utility demand—combining corporate and residential customers—is expected to be willing to purchase renewable contracts by 2030. Setting the price premium at around 20~\$/MWh (equivalently, 2~¢/kWh) for this voluntary segment closely aligns with observed surcharges in today’s VRP programs across several U.S. utilities. Based on this willingness-to-pay benchmark, we reverse-engineer the price-sensitivity parameter to match the implied responsiveness under our exponential specification
\[
D\!\left(\frac{p_t}{e(Q_t)}\right)=M\exp\!\left(-\epsilon\frac{p_t}{e(Q_t)}\right).
\]
For the ISO-NE system, assuming a local utility market size of 10~GW capacity out of a total regional generation capacity of 23~GW, this calibration yields \(\epsilon = 0.0045\). 

The parameterization of \(\epsilon\) and its impact on renewable contract pricing—expressed both on a per-capacity (GW) and per-energy (\$/MWh) basis—are summarized in Table~\ref{tab:price_sensitivity}. As noted by \citet{oshaughnessy_corporate_2021}, offtakers may compensate renewable generators through a mix of volumetric and capacity-based rates, where the capacity-based pricing provides a steadier and more predictable revenue stream. This dual representation therefore captures the practical structure of renewable contracts, in which energy-based and capacity-based premiums jointly determine the overall financial incentive for voluntary participation. The resulting \(\epsilon\) reflects a relatively price-tolerant voluntary segment, consistent with the increasing corporate and community procurement of renewable energy observed in recent years.

\begin{table}[h!]
\centering
\caption{Price thresholds for VRP demand at different adoption levels (\(Q=5\)~GW, \(\epsilon=0.0045\), Wind capacity factor \(=0.35\)).}
\begin{tabular}{lcc}
\toprule
\textbf{Demand Level} & \textbf{Price \(p\) [M\$/GW]} & \textbf{Equivalent Price [\$/MWh]} \\
\midrule
$D = 0.33\,M$  & 69.37  & 22.63  \\
$D = 0.10\,M$  & 145.40  & 47.42 \\
$D = 0.05\,M$  & 247.03  & 80.57 \\
\bottomrule
\end{tabular}
\label{tab:price_sensitivity}
\end{table}

\IfFileExists{fig_main/optimal_pricing_demand.png}
  {\typeout{[FIG FOUND] 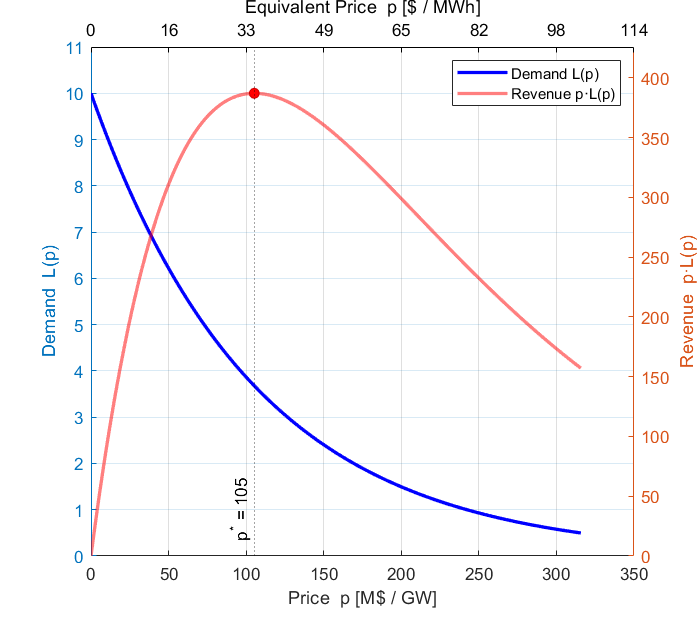}}
  {\typeout{[FIG MISSING] fig_main/optimal_pricing_demand.png}}

\begin{figure}[h!]
    \centering
    \includegraphics[width=0.95\linewidth]{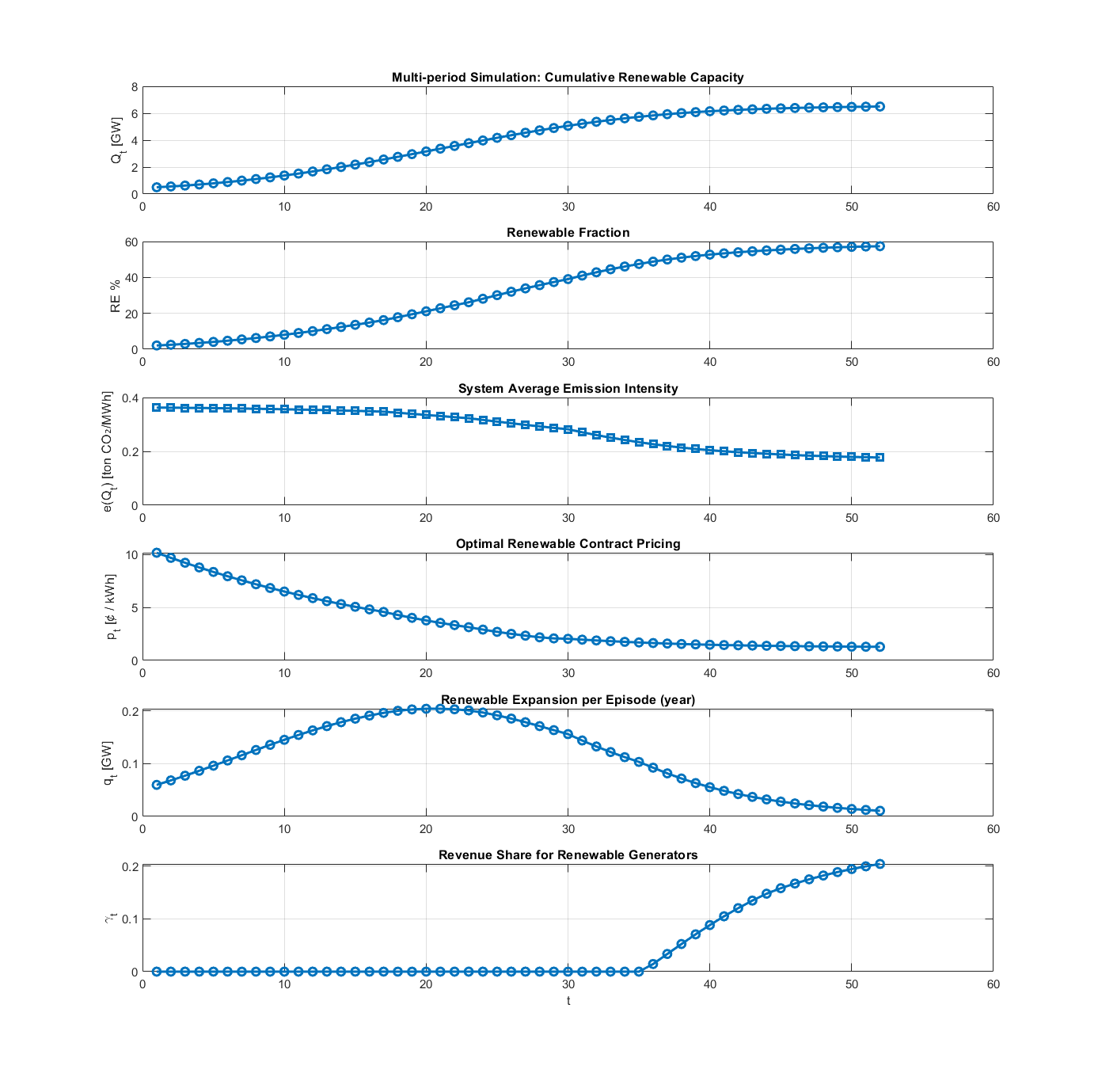}
    \caption{%
    \textbf{Multi-period simulation under baseline demand.}
    The figure summarizes system evolution across multiple decision periods:
    (1) cumulative renewable capacity \(Q_t\),
    (2) modeled renewable (wind) generation percentage,
    (3) grid emissions intensity \(e(Q_t)\),
    (4) optimal renewable program price \(p_t^*\),
    (5) renewable capacity expansion \(q_t^*\),
    and (6) revenue-sharing ratio \(\gamma_t^*\) between the program operator and renewable generators.
    Together, these panels illustrate how VRP revenues can sustain continued renewable deployment while ensuring financial feasibility and declining grid emissions intensity.
    }
    \label{fig:baseline_simulation}
\end{figure}

Figure~\ref{fig:baseline_simulation} summarizes the dynamic behavior of the VRP program. The cumulative renewable capacity \(Q_t\) exhibits the characteristic S-shaped trajectory (Subfigures~1 and~5), consistent with diffusion models such as \citet{bass_new_1969}, where technology adoption accelerates in the early stages and gradually levels off as the market matures. The incremental expansion \(q_t\) first accelerates because, at low renewable penetration levels, generators remain financially self-sufficient from the wholesale energy market, allowing VRP revenues to be fully allocated toward new capacity additions. In this early phase, the rate-limiting factor is the limited volume of renewable available for sale rather than cost recovery, leading to a rapidly increasing expansion rate. As renewable penetration rises, the grid emissions intensity \(e(Q_t)\) declines, reducing the incentive for voluntary differentiation. At the same time, both the operator’s system cost and renewable operating costs increase, gradually slowing expansion. Eventually, investment halts once revenues can no longer cover incremental costs, yielding the observed S-shaped cumulative trajectory.

The grid emissions trajectory (Subfigure~3) decreases monotonically but nonlinearly with \(Q_t\). When renewable capacity interacts with the ISO-NE wholesale market, the fossil generation displaced by renewables follows the merit order of marginal costs rather than emissions intensity, producing a nonlinear decline in emissions intensity. As predicted by the theoretical stopping condition, the terminal emissions intensity \(e(Q^*)\) remains strictly positive: once the grid approaches full decarbonization, the willingness to pay for additional VRPs disappears, and further expansion ceases. Thus, while the programs deepen renewable penetration, they cannot independently achieve a fully net-zero grid.

The renewable premium price \(p_t^*\) (Subfigure~4) declines monotonically over time, consistent with the analytical result that the optimal price decreases as both willingness to pay and emissions differentiation diminish. The revenue-sharing parameter \(\gamma_t^*\) (Subfigure~6) reveals the transition between spontaneous and policy-supported expansion phases. When \(Q_t\) is small, \(\gamma_t^* = 0\), indicating that renewables can recover all costs from the energy market (i.e., \(C_2(Q_t)\le 0\)). The point at which \(\gamma_t^*\) becomes positive marks the onset of the VRP’s active phase: beyond this threshold, VRP revenues are partially redirected to sustain generator viability and to finance the operator’s system-level investments, particularly in battery energy storage.

More specifically, low-carbon generation currently accounts for roughly 39\% of ISO-NE’s total demand, broadly matching our baseline numerical illustration of 5 GW of renewable capacity in a utility system with 10 GW peak demand. This corresponds to approximately period 30 on the x-axis across all subfigures in Figure~\ref{fig:baseline_simulation}. At this point, our model suggests that the system is roughly five years away from the stage at which revenue sharing with renewable generators becomes necessary to sustain their financial balance, due to declining revenues from the energy market. Consistent with current conditions, the model therefore indicates that renewable generators may begin to require support from voluntary renewable procurement programs around 2030.

Quantitatively, the program extends renewable capacity from roughly 5.5~GW to 6.5~GW—a nontrivial increase—but the additional 1~GW requires nearly the same duration (about 30 ~years) as the spontaneous expansion from 0.5~GW to 5.5~GW. This finding underscores that while voluntary mechanisms can meaningfully extend renewable capacity, they do so at a substantially slower marginal rate compared with early-stage growth.

\section{Discussion and Conclusions}

This paper develops an institutionally grounded benchmark for voluntary renewable program (VRP) design under utility-led implementation. The framework is deliberately general: it does not depend on a particular market design, ownership structure, or numerical calibration, but instead isolates a small set of economically meaningful primitives---the emissions-differentiation channel \(e(Q)\), the demand response to the implied abatement price, the cost of renewable expansion, and the regulatory requirement of contemporaneous financial feasibility. These ingredients allow the model to apply broadly across utility-operated voluntary programs, including vertically integrated and unbundled settings coupled with independent power producers, while preserving analytical tractability.

The first central insight is that voluntary renewable programs can meaningfully support renewable expansion, but their long-run effect is inherently limited. VRP demand creates value by monetizing emissions differentiation, thereby generating revenue that can be directed to new renewable additions. However, as renewable penetration rises and grid emissions intensity falls, the value of that differentiation declines endogenously. As a result, voluntary program revenue eventually becomes just sufficient to cover ongoing system and participation requirements, leaving no residual for further expansion. In this sense, voluntary programs can accelerate the transition, but they cannot by themselves complete it. Full decarbonization requires complementary mandatory instruments, such as compliance obligations, clean electricity standards, or carbon pricing, that preserve incentives to abate even when average grid emissions are already low.

The second insight is a separation result: conditional on the renewable-capacity state \(Q\), optimal pricing is pinned down by the revenue problem alone, while expansion is determined by the remaining feasibility margin after costs are covered. Our results are not limited to the exponential demand specification (Proposition~\ref{prop:general_pricing_reduction}). The core single-period analysis extends to a general revenue function $R(p,Q)$ under mild regularity conditions, so the main conclusions continue to hold for alternative demand forms (e.g., logit or isoelastic) and for alternative monotone ways in which $Q$ enters demand through emissions intensity beyond the ratio $p/e(Q)$. For instance, demand may take the form $D\left(g(p,e(Q))\right)$ for a monotone index $g$, or $D(p,e(Q))=\phi(e(Q))\bar D(p)$. In all such cases, whenever the financial constraint binds, the optimal pricing rule remains $p^*(Q)\in\arg\max_{p\in\mathcal P(Q)}R(p,Q)$; and any downstream result that depends only on the revenue envelope $R^\star(Q):=\max_{p\in\mathcal P(Q)}R(p,Q)$ can be restated in terms of that envelope. This result has both analytical and empirical value. Analytically, it clarifies why the pricing problem admits a clean characterization. Empirically, it implies that understanding VRP demand---its level, elasticity, and heterogeneity---is first-order for evaluating program performance, even before detailed engineering-cost calibration is introduced. The demand side determines the attainable revenue frontier, and thus the room available for renewable expansion.

The third insight is dynamic. Under the monotone reachability condition, a simple myopic policy that expands to the per-period feasible limit is globally optimal. This gives the model a strong operational interpretation: when the condition holds, the utility need not solve a complicated long-horizon planning problem or rely on highly uncertain forecasts of future technology costs, demand growth, or market conditions. Instead, it can follow a transparent period-by-period expansion rule and still attain the long-run optimum. This is especially relevant in real program environments, where long-run projections are uncertain and policy credibility often depends on simple, auditable decision rules.

The fourth insight concerns institutional design. In the unbundled setting, optimal revenue sharing is state-dependent. At low renewable penetration, when generators remain viable from wholesale-market revenues, the operator optimally retains the full program revenue and directs it toward additional expansion. As penetration rises and generator-side viability becomes tighter, a growing share of VRP revenue must be allocated to sustain participation, reducing the residual available for new capacity. This phase structure implies that voluntary renewable programs are most powerful when launched early: the same demand base produces greater expansion when emissions differentiation is high and fewer revenues must be diverted toward maintaining incumbent viability. Delayed adoption, by contrast, shifts the role of the program from expansion finance toward partial support of an already transformed system. Consistent with our numerical interpretation, the model suggests that for many markets already approaching relatively high renewable penetration, the revenue-sharing phase may arrive soon as renewable energy-market revenues continue to shrink, and the framework therefore provides analytical support for designing such revenue-sharing policies.

More broadly, the framework highlights a governance requirement for credible voluntary programs. Because the model is profit-neutral and feasibility-based, program effectiveness depends on revenues being transparently allocated toward the modeled objectives rather than diverted elsewhere. Transparent accounting, verification of credit-backed delivery, and credible commitment over revenue use are therefore not secondary implementation details; they are central to preserving demand and sustaining the pricing mechanism itself. A voluntary market can generate meaningful expansion only if participants believe that the premium they pay is translated into real and verifiable system impact.

Taken together, these results position the framework as a benchmark for analyzing utility-operated VRP programs under realistic regulatory constraints. Its main contribution is not to provide a fully exhaustive description of every institutional detail, but to identify the core economic logic linking emissions differentiation, voluntary demand, utility feasibility, and renewable expansion. That logic yields a clear policy conclusion: voluntary renewable programs can improve price transparency, strengthen impact transparency, and accelerate renewable deployment, but their role is fundamentally complementary rather than sufficient. Their greatest value lies in providing an early, credible, and financially disciplined channel for renewable expansion within a broader decarbonization architecture that must ultimately include mandatory policy support.


\bibliographystyle{plainnat}
\bibliography{Renewable_economics_bib_20251103}

\newpage
\section{Appendix A: Math Proofs}

\subsection{Theorem 1: Single-Period Optimal Pricing}

\begin{proof}
Fix a state $Q$ and abbreviate $e:=e(Q)>0$, $f:=f(Q)$, and $C:=C(Q)$. Under exponential demand condition,
\[
D\!\left(\frac{p}{e}\right)=M\exp\!\left(-\epsilon\frac{p}{e}\right),\qquad
R(p,Q)=p\,D\!\left(\frac{p}{e}\right)=pM\exp\!\left(-\epsilon\frac{p}{e}\right).
\]
Rewrite constraints in $\le 0$ form:
\[
h_1(p,q):=D\!\left(\frac{p}{e}\right)-f\le 0,\quad
h_2(p,q):=C+kq-R(p,Q)\le 0,\quad
h_3(p,q):=-q\le 0,\quad
h_4(p,q):=-p\le 0.
\]
Let multipliers $(\lambda,\mu,\nu,\eta)\ge 0$ correspond to $(h_1,h_2,h_3,h_4)$ and define the Lagrangian
\[
\mathcal{L}(p,q;\lambda,\mu,\nu,\eta)
= q-\lambda h_1(p,q)-\mu h_2(p,q)-\nu h_3(p,q)-\eta h_4(p,q).
\]
The KKT conditions are:

\noindent\emph{(i) Primal feasibility:}
\[
D\!\left(\frac{p^*}{e}\right)\le f,\quad C+kq^*\le R(p^*,Q),\quad q^*\ge 0,\quad p^*\ge 0.
\]
\emph{(ii) Dual feasibility:} $\lambda^*,\mu^*,\nu^*,\eta^*\ge 0$.

\noindent\emph{(iii) Complementary slackness:}
\[
\lambda^*\!\left(D\!\left(\frac{p^*}{e}\right)-f\right)=0,\quad
\mu^*\!\left(C+kq^*-R(p^*,Q)\right)=0,\quad
\nu^*q^*=0,\quad
\eta^*p^*=0.
\]
\emph{(iv) Stationarity:} using $D'(x)=-\epsilon D(x)$ and
\[
R_p(p,Q)=\frac{\partial R(p,Q)}{\partial p}
=D\!\left(\frac{p}{e}\right)\!\left(1-\epsilon\frac{p}{e}\right),
\]
we have
\begin{align*}
\frac{\partial \mathcal{L}}{\partial q}
&=1-\mu k+\nu=0,\\
\frac{\partial \mathcal{L}}{\partial p}
&=\lambda\frac{\epsilon}{e}D\!\left(\frac{p}{e}\right)+\mu\,D\!\left(\frac{p}{e}\right)\!\left(1-\epsilon\frac{p}{e}\right)+\eta=0.
\end{align*}

Assume $q^*(Q)>0$. Then $\nu^*=0$ by complementary slackness, hence stationarity in $q$ implies
\[
1-\mu^*k=0\quad\Rightarrow\quad \mu^*=\frac{1}{k}>0.
\]
Moreover, if $C+kq^*<R(p^*,Q)$ held strictly, one could increase $q$ slightly without violating any constraint, contradicting optimality of $q^*$. Hence the financial constraint always binds at optimal solutions:
\[
C+kq^*=R(p^*,Q).
\]
Since $q^*>0$ requires $R(p^*,Q)>C$ and $R(0,Q)=0$, we must have $p^*>0$, so $\eta^*=0$. Substituting $\mu^*=1/k$ and $\eta^*=0$ into stationarity in $p$ and dividing by $D(p^*/e)>0$ yields
\[
\lambda^*\frac{\epsilon}{e}+\frac{1}{k}\left(1-\epsilon\frac{p^*}{e}\right)=0
\quad\Rightarrow\quad
p^*=\frac{e}{\epsilon}+k\lambda^*.
\]
Now there are two cases.

\smallskip
\noindent\textbf{Case 1 (deliverability slack).} If $D(p^*/e)<f$, then complementary slackness implies $\lambda^*=0$, hence
\[
p^*=\frac{e}{\epsilon}.
\]
Feasibility requires $D(1/\epsilon)=D\!\left((e/\epsilon)/e\right)\le f$, which is exactly the first regime condition.

\smallskip
\noindent\textbf{Case 2 (deliverability binding).} If $D(p^*/e)=f$, then solving $M\exp(-\epsilon p^*/e)=f$ gives
\[
p^*=\frac{e}{\epsilon}\ln\!\left(\frac{M}{f}\right),
\]
which applies precisely when the unconstrained maximizer violates deliverability, i.e.\ $D(1/\epsilon)>f$.

It remains to verify optimality and uniqueness. Under $q^*>0$, maximizing $q$ subject to $C+kq\le R(p,Q)$ is equivalent to maximizing $R(p,Q)$ over feasible $p$ (since $q=(R(p,Q)-C)/k$ whenever $R(p,Q)>C$). The revenue derivative is
\[
R_p(p,Q)=D\!\left(\frac{p}{e}\right)\!\left(1-\epsilon\frac{p}{e}\right),
\]
so the unique stationary point of $R(\cdot,Q)$ on $(0,\infty)$ is $p_u=e/\epsilon$, where $R_p(p_u,Q)=0$.
Moreover,
\[
R_{pp}(p,Q)=\frac{\partial^2 R(p,Q)}{\partial p^2}
=-\frac{\epsilon}{e}D\!\left(\frac{p}{e}\right)\left(2-\epsilon\frac{p}{e}\right),
\]
so $R$ is strictly increasing on $(0,e/\epsilon)$ and strictly decreasing on $(e/\epsilon,\infty)$; hence $p_u$ is the unique global maximizer of $R(\cdot,Q)$ absent the deliverability constraint. If $D(1/\epsilon)\le f$, then $p_u$ is feasible and therefore optimal. If $D(1/\epsilon)>f$, feasibility requires $D(p/e)\le f$, which (since $D$ is strictly decreasing in $p$) is equivalent to $p\ge \frac{e}{\epsilon}\ln(M/f)>\frac{e}{\epsilon}$; on this feasible set $R$ is strictly decreasing, so the unique maximizer is attained at the boundary $p=\frac{e}{\epsilon}\ln(M/f)$. Therefore $p^*(Q)$ is optimal and unique, and it is given by the theorem.
\end{proof}

\subsubsection{Proof of Proposition~\ref{prop:general_pricing_reduction}}

\begin{proof}[Proof of Proposition~\ref{prop:general_pricing_reduction}]
Fix a state $Q$. Consider the single-period problem (for this fixed $Q$) in which $(p,q)$ must satisfy $p\in\mathcal P(Q)$, $q\ge 0$, and the integrated financial feasibility constraint
\begin{equation}\label{eq:gen_fin_constraint}
C(Q)+kq\le R(p,Q),\qquad k>0,
\end{equation}
and the objective is $\max q$. Assume the optimal solution satisfies $q^*(Q)>0$.

\paragraph{Step 1: the revenue maximization problem attains a maximizer.}
Let $\tilde p\in\mathcal P(Q)$ (nonempty by assumption) and define $\tilde R:=R(\tilde p,Q)\ge 0$.
By (A2), there exists $\bar p>0$ such that for all $p\ge \bar p$,
\begin{equation}\label{eq:tail_small}
R(p,Q)\le \tilde R.
\end{equation}
Define the truncated feasible set
\[
\mathcal P_{\bar p}(Q):=\mathcal P(Q)\cap[0,\bar p].
\]
Because $\mathcal P(Q)$ is closed, $\mathcal P_{\bar p}(Q)$ is closed, and since $[0,\bar p]$ is compact, $\mathcal P_{\bar p}(Q)$ is compact. By (A1), $R(\cdot,Q)$ is continuous on $\mathcal P_{\bar p}(Q)$, hence by the Weierstrass extreme value theorem there exists $p^{\max}\in\mathcal P_{\bar p}(Q)$ such that
\[
R(p^{\max},Q)=\max_{p\in\mathcal P_{\bar p}(Q)}R(p,Q).
\]
Moreover, \eqref{eq:tail_small} implies that for any $p\in\mathcal P(Q)$ with $p\ge \bar p$,
\[
R(p,Q)\le \tilde R\le \max_{p\in\mathcal P_{\bar p}(Q)}R(p,Q)=R(p^{\max},Q),
\]
so $p^{\max}$ is also a maximizer over the full set $\mathcal P(Q)$. Therefore,
\[
p^{\max}\in\arg\max_{p\in\mathcal P(Q)}R(p,Q),
\]
and the argmax set is nonempty.

\paragraph{Step 2: in the expansion problem, any optimal price must maximize revenue.}
Let $(p^*,q^*)$ be an optimal solution of the single-period problem with $q^*>0$.
We first show the financial constraint \eqref{eq:gen_fin_constraint} binds at $(p^*,q^*)$. Suppose instead it is slack:
\[
C(Q)+kq^*<R(p^*,Q).
\]
Then there exists $\delta>0$ such that $C(Q)+k(q^*+\delta)\le R(p^*,Q)$, hence $(p^*,q^*+\delta)$ remains feasible (since $p^*\in\mathcal P(Q)$ and $q^*+\delta\ge 0$) and achieves a strictly larger objective value, contradicting optimality. Therefore,
\begin{equation}\label{eq:bind_fin_general}
C(Q)+kq^*=R(p^*,Q).
\end{equation}

Now take any $p\in\mathcal P(Q)$. The maximum feasible expansion at price $p$ is
\[
\hat q(p):=\max\Big\{q\ge 0:\ C(Q)+kq\le R(p,Q)\Big\}
=\max\left\{0,\frac{R(p,Q)-C(Q)}{k}\right\}.
\]
Because $q^*>0$, we have $R(p^*,Q)>C(Q)$ by \eqref{eq:bind_fin_general}, and hence $\hat q(p^*)=(R(p^*,Q)-C(Q))/k$.
On the region where $R(p,Q)\ge C(Q)$, the map $p\mapsto \hat q(p)$ is an increasing affine transformation of $p\mapsto R(p,Q)$. Consequently, if there existed $p'\in\mathcal P(Q)$ such that $R(p',Q)>R(p^*,Q)$, then $\hat q(p')>\hat q(p^*)=q^*$, implying the feasible pair $(p',\hat q(p'))$ achieves strictly larger objective value than $(p^*,q^*)$, a contradiction. Hence no feasible price can yield strictly higher revenue than $p^*$, i.e.
\[
p^*(Q)\in \arg\max_{p\in\mathcal P(Q)}R(p,Q).
\]
This proves the claim.
\end{proof}

\subsection{Theorem 2: Existence and uniqueness of the long-run capacity limit}

\begin{proof}
Recall that, under the integrated formulation and given the optimal pricing rule from Theorem~\ref{thm:single_period_pstar}, the optimal single-period expansion satisfies
\[
q^*(Q)=\frac{R(Q)-C(Q)}{k},
\]
where $R(Q):=R(p^*(Q),Q)$ denotes the maximized per-period VREC revenue at state $Q$. Hence $q^*(Q)=0$ if and only if
\begin{equation}\label{eq:stop_cond}
R(Q)=C(Q).
\end{equation}

By assumption, there exists $Q^\dagger$ such that the renewable deliverability constraint is non-binding for all $Q\ge Q^\dagger$. Therefore, for all $Q\ge Q^\dagger$, the optimal price is given by the unconstrained interior solution from Theorem~\ref{thm:single_period_pstar},
\[
p^*(Q)=\frac{e(Q)}{\epsilon},
\]
and thus demand is constant at
\[
D\!\left(\frac{p^*(Q)}{e(Q)}\right)=D\!\left(\frac{1}{\epsilon}\right)=M e^{-1}.
\]
Consequently, for all $Q\ge Q^\dagger$,
\begin{equation}\label{eq:R_closed_form_region}
R(Q)=p^*(Q)\,D\!\left(\frac{p^*(Q)}{e(Q)}\right)
=\frac{e(Q)}{\epsilon}\cdot M e^{-1}
=\frac{M}{e\,\epsilon}\,e(Q).
\end{equation}
Define the function
\[
F(Q):=R(Q)-C(Q),\qquad Q\ge Q^\dagger.
\]
Under \eqref{eq:R_closed_form_region}, for $Q\ge Q^\dagger$ we can write
\begin{equation}\label{eq:F_def}
F(Q)=\frac{M}{e\,\epsilon}\,e(Q)-C(Q).
\end{equation}
By the maintained regularity of primitives (in particular, continuity of $e(\cdot)$ and $C(\cdot)$), $F$ is continuous on $[Q^\dagger,\infty)$.

\smallskip
\noindent\textbf{Existence.}
The feasibility condition in the theorem is precisely
\[
C(Q^\dagger)\le \frac{M}{e\,\epsilon}\,e(Q^\dagger),
\]
which is equivalent to $F(Q^\dagger)\ge 0$. Moreover, since $C(Q)\to+\infty$ as $Q\to\infty$ and $R(Q)$ is finite for each finite $Q$ (in particular, $R(Q)=\frac{M}{e\epsilon}e(Q)$ for $Q\ge Q^\dagger$), we have
\[
\lim_{Q\to\infty}F(Q)=\lim_{Q\to\infty}\left(\frac{M}{e\,\epsilon}\,e(Q)-C(Q)\right)=-\infty.
\]
Therefore there exists $\bar Q\ge Q^\dagger$ such that $F(\bar Q)<0$. Since $F$ is continuous on $[Q^\dagger,\bar Q]$ and satisfies $F(Q^\dagger)\ge 0$ and $F(\bar Q)<0$, the Intermediate Value Theorem implies the existence of some $Q^*\in[Q^\dagger,\bar Q]$ such that $F(Q^*)=0$. By \eqref{eq:stop_cond}, this is equivalent to $q^*(Q^*)=0$.

\smallskip
\noindent\textbf{Uniqueness.}
For all $Q\ge Q^\dagger$, \eqref{eq:F_def} holds. Since $C(Q)$ is strictly increasing on $[Q^\dagger,\infty)$ and $\frac{M}{e\epsilon}e(Q)$ is fixed given $e(Q)$, it follows that $F(Q)$ is strictly decreasing on $[Q^\dagger,\infty)$ whenever $e(Q)$ is weakly decreasing on this region (which is the natural case as renewable penetration reduces grid-average emissions intensity). In that case, $F$ can cross zero at most once, so the solution $Q^*$ is unique.

Equivalently, uniqueness can be shown directly by contradiction: suppose there exist $Q_1,Q_2\ge Q^\dagger$ with $Q_1<Q_2$ such that $F(Q_1)=F(Q_2)=0$. Then
\[
\frac{M}{e\,\epsilon}\,e(Q_1)-C(Q_1)=\frac{M}{e\,\epsilon}\,e(Q_2)-C(Q_2).
\]
Rearranging yields
\[
C(Q_2)-C(Q_1)=\frac{M}{e\,\epsilon}\big(e(Q_2)-e(Q_1)\big).
\]
The left-hand side is strictly positive because $C(\cdot)$ is strictly increasing, while the right-hand side is non-positive if $e(\cdot)$ is weakly decreasing, a contradiction as $e(\cdot)$ by definition is a decreasing function. Hence there can be at most one $Q^*\ge Q^\dagger$ solving $F(Q)=0$.

\smallskip
\noindent\textbf{Characterization.}
Finally, by \eqref{eq:R_closed_form_region} and \eqref{eq:stop_cond}, any $Q^*\ge Q^\dagger$ such that $q^*(Q^*)=0$ must satisfy
\[
C(Q^*)=R(Q^*)=\frac{M}{e\,\epsilon}\,e(Q^*),
\]
which is exactly \eqref{eq:Qstar_equation}. Conversely, any $Q\ge Q^\dagger$ satisfying \eqref{eq:Qstar_equation} yields $F(Q)=0$ and therefore $q^*(Q)=0$. This establishes existence, uniqueness, and the stated characterization.
\end{proof}

\subsection{Theorem 3: Optimal Myopic Policy}

\begin{proof}
Fix any feasible policy $\mathcal P:=\{q_t^{\mathcal P},p_t^{\mathcal P}\}_{t\ge 0}$ satisfying the \emph{No overbuild} requirement
\[
0\le q_t^{\mathcal P}\le Q^\star-Q_t^{\mathcal P}\qquad \text{for all }t\ge 0,
\]
and let $\{Q_t^{\mathcal P}\}_{t\ge 0}$ denote the induced state path under the transition
\[
Q_{t+1}^{\mathcal P}=Q_t^{\mathcal P}+q_t^{\mathcal P}.
\]
Define $\bar q(Q)$ and $S(Q)=Q+\bar q(Q)$ as in \eqref{eq:monotone_reach}. By definition of $\bar q(Q)$ as the maximal \emph{feasible} one-step expansion at state $Q$, feasibility of $\mathcal P$ implies
\begin{equation}\label{eq:q_le_barq}
q_t^{\mathcal P}\le \bar q(Q_t^{\mathcal P})\qquad \text{for all }t\ge 0.
\end{equation}
Indeed, for each $t$, $(p_t^{\mathcal P},q_t^{\mathcal P})$ is a feasible pair at state $Q_t^{\mathcal P}$, and $\bar q(Q_t^{\mathcal P})$ is the supremum of feasible $q$ at that state; hence \eqref{eq:q_le_barq} holds.

Now define the myopic policy by
\[
q_t^{\mathrm{myo}}=\min\{\bar q(Q_t^{\mathrm{myo}}),\,Q^\star-Q_t^{\mathrm{myo}}\},
\qquad
Q_{t+1}^{\mathrm{myo}}=Q_t^{\mathrm{myo}}+q_t^{\mathrm{myo}}.
\]
Equivalently,
\begin{equation}\label{eq:Q_myo_update}
Q_{t+1}^{\mathrm{myo}}
=\min\{Q_t^{\mathrm{myo}}+\bar q(Q_t^{\mathrm{myo}}),\,Q^\star\}
=\min\{S(Q_t^{\mathrm{myo}}),\,Q^\star\}.
\end{equation}

We prove by induction that, for all $t\ge 0$,
\begin{equation}\label{eq:dominance_full}
Q_t^{\mathrm{myo}}\ge Q_t^{\mathcal P}.
\end{equation}

\paragraph{Base case ($t=0$).}
By assumption both trajectories start from the same initial condition, hence
\[
Q_0^{\mathrm{myo}}=Q_0^{\mathcal P}=Q_{\mathrm{init}},
\]
so \eqref{eq:dominance_full} holds for $t=0$.

\paragraph{Inductive step.}
Fix any $t\ge 0$ and assume as induction hypothesis that \eqref{eq:dominance_full} holds at time $t$, i.e.,
\[
Q_t^{\mathrm{myo}}\ge Q_t^{\mathcal P}.
\]
We show it holds at time $t+1$.

First, combining the transition equation with \eqref{eq:q_le_barq} yields
\begin{equation}\label{eq:P_next_le_S}
Q_{t+1}^{\mathcal P}
=Q_t^{\mathcal P}+q_t^{\mathcal P}
\le Q_t^{\mathcal P}+\bar q(Q_t^{\mathcal P})
=S(Q_t^{\mathcal P}).
\end{equation}
Second, by \eqref{eq:Q_myo_update},
\begin{equation}\label{eq:myo_next}
Q_{t+1}^{\mathrm{myo}}=\min\{S(Q_t^{\mathrm{myo}}),\,Q^\star\}.
\end{equation}
By the induction hypothesis $Q_t^{\mathrm{myo}}\ge Q_t^{\mathcal P}$ and the assumed \emph{Monotone reachability} (i.e.\ $S$ is non-decreasing on $[Q_{\mathrm{init}},Q^\star)$), we have
\begin{equation}\label{eq:S_mono}
S(Q_t^{\mathrm{myo}})\ge S(Q_t^{\mathcal P}).
\end{equation}
Applying the (componentwise) monotonicity of the map $x\mapsto \min\{x,Q^\star\}$ to \eqref{eq:S_mono} gives
\begin{equation}\label{eq:min_mono}
\min\{S(Q_t^{\mathrm{myo}}),\,Q^\star\}\ge \min\{S(Q_t^{\mathcal P}),\,Q^\star\}.
\end{equation}
Combining \eqref{eq:myo_next} with \eqref{eq:min_mono} yields
\begin{equation}\label{eq:myo_ge_min_S_P}
Q_{t+1}^{\mathrm{myo}}
\ge \min\{S(Q_t^{\mathcal P}),\,Q^\star\}.
\end{equation}
Finally, from \eqref{eq:P_next_le_S} and the same monotonicity of $x\mapsto \min\{x,Q^\star\}$, we obtain
\begin{equation}\label{eq:min_P_next}
\min\{S(Q_t^{\mathcal P}),\,Q^\star\}\ge \min\{Q_{t+1}^{\mathcal P},\,Q^\star\}.
\end{equation}
Putting \eqref{eq:myo_ge_min_S_P} and \eqref{eq:min_P_next} together,
\begin{equation}\label{eq:myo_ge_min_P_next}
Q_{t+1}^{\mathrm{myo}}\ge \min\{Q_{t+1}^{\mathcal P},\,Q^\star\}.
\end{equation}

At this point, we invoke \emph{No overbuild} for policy $\mathcal P$: since $0\le q_t^{\mathcal P}\le Q^\star-Q_t^{\mathcal P}$,
\[
Q_{t+1}^{\mathcal P}=Q_t^{\mathcal P}+q_t^{\mathcal P}\le Q_t^{\mathcal P}+(Q^\star-Q_t^{\mathcal P})=Q^\star,
\]
hence $\min\{Q_{t+1}^{\mathcal P},Q^\star\}=Q_{t+1}^{\mathcal P}$. Substituting into \eqref{eq:myo_ge_min_P_next} yields
\[
Q_{t+1}^{\mathrm{myo}}\ge Q_{t+1}^{\mathcal P},
\]
which is exactly \eqref{eq:dominance_full} at time $t+1$. This completes the induction.

Therefore, \eqref{eq:dominance_full} holds for all $t\ge 0$.

\smallskip
\noindent\textbf{Global optimality.}
The per-period stage cost is $-q_t$, so minimizing total discounted (or undiscounted) stage cost is equivalent to maximizing cumulative expansion $\sum_t q_t$ until the process reaches $Q^\star$. Since $Q_{t+1}=Q_t+q_t$, the dominance result \eqref{eq:dominance_full} implies that the myopic policy reaches any given intermediate capacity level weakly earlier than any feasible policy and, in particular, reaches $Q^\star$ in weakly fewer periods whenever $Q^\star$ is reachable. Hence the myopic policy yields weakly larger cumulative expansion at every finite horizon and therefore is globally optimal among feasible policies.
\end{proof}

\subsection{Propositions with Optimal Revenue Sharing}

\begin{proof}[Proof of Proposition~\ref{prop:gamma_star}]
Fix a state $Q$ and abbreviate $e:=e(Q)$, $f:=f(Q)$, $C_1(q):=C_1(Q,q)=C_S(Q)+kq$, $C_2:=C_2(Q)$, and $R(p):=R(p,Q)$.
Rewrite \eqref{eq:opt_problem_single_rs} in standard $\le 0$ form:
\[
\begin{aligned}
h_1(p,q,\gamma)&:=D\!\left(\frac{p}{e}\right)-f \le 0,\\
h_2(p,q,\gamma)&:=C_1(q)-(1-\gamma)R(p)\le 0,\\
h_3(p,q,\gamma)&:=C_2-\gamma R(p)\le 0,\\
h_4(p,q,\gamma)&:=-q\le 0,\qquad
h_5(p,q,\gamma):=-p\le 0,\\
h_6(p,q,\gamma)&:=-\gamma\le 0,\qquad
h_7(p,q,\gamma):=\gamma-1\le 0.
\end{aligned}
\]
Let multipliers $(\lambda,\mu,\theta,\nu,\eta,\alpha,\beta)\ge 0$ correspond to $(h_1,\dots,h_7)$, respectively. The Lagrangian is
\[
\mathcal{L}(p,q,\gamma)
= q-\lambda h_1-\mu h_2-\theta h_3-\nu h_4-\eta h_5-\alpha h_6-\beta h_7.
\]
Using $D'(x)=-\epsilon D(x)$ and $R_p(p)=D(p/e)\big(1-\epsilon p/e\big)$, stationarity gives:
\begin{align}
\frac{\partial \mathcal{L}}{\partial q}=0
&\iff 1-\mu k+\nu=0, \label{eq:rs_kkt_q}\\
\frac{\partial \mathcal{L}}{\partial p}=0
&\iff \lambda\frac{\epsilon}{e}D\!\left(\frac{p}{e}\right)
+(\mu(1-\gamma)+\theta\gamma)\,R_p(p)+\eta=0, \label{eq:rs_kkt_p}\\
\frac{\partial \mathcal{L}}{\partial \gamma}=0
&\iff -\mu R(p)+\theta R(p)+\alpha-\beta=0. \label{eq:rs_kkt_gamma}
\end{align}
Primal feasibility, dual feasibility, and complementary slackness are:
\begin{align}
&D(p/e)\le f,\quad C_1(q)\le (1-\gamma)R(p),\quad C_2\le \gamma R(p),\quad q,p\ge 0,\quad \gamma\in[0,1], \label{eq:rs_pf}\\
&\lambda,\mu,\theta,\nu,\eta,\alpha,\beta\ge 0, \label{eq:rs_df}\\
&\lambda\!\left(D(p/e)-f\right)=0,\quad
\mu\!\left(C_1(q)-(1-\gamma)R(p)\right)=0,\quad
\theta\!\left(C_2-\gamma R(p)\right)=0,\label{eq:rs_cs1}\\
&\nu q=0,\quad \eta p=0,\quad \alpha \gamma=0,\quad \beta(\gamma-1)=0. \label{eq:rs_cs2}
\end{align}

Assume the optimal solution exhibits strictly positive expansion $q^*(Q)>0$. Then $\nu^*=0$ by \eqref{eq:rs_cs2}, and \eqref{eq:rs_kkt_q} implies
\begin{equation}\label{eq:mu_star_rs}
\mu^*=\frac{1}{k}>0.
\end{equation}
Moreover, as in the integrated benchmark, if $C_1(q^*)<(1-\gamma^*)R(p^*)$, then one can increase $q$ slightly while keeping $(p^*,\gamma^*)$ fixed and all constraints feasible, contradicting optimality. Hence the operator account constraint binds:
\begin{equation}\label{eq:op_bind}
C_1(q^*)=(1-\gamma^*)R(p^*).
\end{equation}

Now consider the choice of $\gamma$.

\smallskip
\noindent\textbf{Case 1: $C_2\le 0$.}
Then for any $(p,q)$ with $R(p)\ge 0$ and any $\gamma\ge 0$,
\[
C_2\le 0\le \gamma R(p),
\]
so the generator-viability constraint $h_3\le 0$ is slack at $\gamma=0$. Thus $\theta^*=0$ by complementary slackness \eqref{eq:rs_cs1}. Because $q^*>0$ forces $R(p^*)>0$ (otherwise \eqref{eq:op_bind} cannot hold with $C_1(q^*)>0$), the slackness of $h_3$ at $\gamma=0$ implies that setting $\gamma=0$ is feasible for generators.

We now show $\gamma^*=0$ is optimal. Take any feasible solution with $\gamma>0$ and the same $(p,q)$. Decreasing $\gamma$ weakly increases $(1-\gamma)R(p)$ and therefore relaxes \eqref{eq:opt_problem_single_rs_b} while leaving \eqref{eq:opt_problem_single_rs_a} unchanged; since $C_2\le 0$, \eqref{eq:opt_problem_single_rs_c} remains satisfied at $\gamma=0$. Hence the feasible set in $(p,q)$ weakly expands as $\gamma$ decreases, and the objective $\max q$ cannot be improved by choosing $\gamma>0$. Therefore $\gamma^*(Q)=0$, consistent with \eqref{eq:gamma_star}.

\smallskip
\noindent\textbf{Case 2: $C_2>0$ and $\gamma^*(Q)\in(0,1)$.}
If $\gamma^*\in(0,1)$, then the bound constraints on $\gamma$ are slack, hence $\alpha^*=\beta^*=0$ by \eqref{eq:rs_cs2}. Stationarity in $\gamma$, \eqref{eq:rs_kkt_gamma}, gives
\[
(-\mu^*+\theta^*)R(p^*)=0.
\]
Since $q^*>0$ and \eqref{eq:op_bind} imply $R(p^*)>0$, we conclude
\begin{equation}\label{eq:theta_eq_mu}
\theta^*=\mu^*=\frac{1}{k}>0.
\end{equation}
Because $\theta^*>0$, complementary slackness in \eqref{eq:rs_cs1} implies the generator-viability constraint binds:
\[
C_2=\gamma^* R(p^*).
\]
Equivalently,
\[
\gamma^*=\frac{C_2}{R(p^*)}.
\]
Finally, evaluating at the expansion-maximizing price in the integrated benchmark $p^*(Q)$ and letting $R^*(Q):=R(p^*(Q),Q)$ yields
\[
\gamma^*(Q)=\frac{C_2(Q)}{R^*(Q)}.
\]
Combining with the previous case $C_2(Q)\le 0\Rightarrow \gamma^*(Q)=0$ proves \eqref{eq:gamma_star}.
\end{proof}

\begin{proof}[Proof of Proposition~\ref{prop:interior_equivalence}]
Fix $Q$ and suppose the optimal revenue-sharing choice is interior, $\gamma^*(Q)\in(0,1)$, and the optimal expansion is strictly positive, $q^*(Q)>0$.
Using the KKT system developed above, interiority implies $\alpha^*=\beta^*=0$ (slack bounds on $\gamma$), and $q^*>0$ implies $\nu^*=0$.

\smallskip
\noindent\textbf{Step 1: Both separated financial constraints bind.}
From $q^*(Q)>0$, the same argument as in \eqref{eq:op_bind} yields that the operator account constraint binds:
\[
C_1(Q,q^*)=(1-\gamma^*)R(p^*,Q),
\]
and hence $\mu^*>0$.
From the $\gamma$-stationarity \eqref{eq:rs_kkt_gamma} with $\alpha^*=\beta^*=0$ we obtain $(\theta^*-\mu^*)R(p^*,Q)=0$.
Because $q^*>0$ implies $R(p^*,Q)>0$, it follows that $\theta^*=\mu^*>0$, so $\theta^*>0$.
By complementary slackness for $h_3$ in \eqref{eq:rs_cs1}, $\theta^*>0$ implies the generator-viability constraint binds:
\[
C_2(Q)=\gamma^* R(p^*,Q).
\]
Therefore, at an interior optimum,
\begin{equation}\label{eq:both_bind}
C_1(Q,q^*)=(1-\gamma^*)R(p^*,Q),
\qquad
C_2(Q)=\gamma^*R(p^*,Q).
\end{equation}

\smallskip
\noindent\textbf{Step 2: Aggregation yields the integrated financial constraint.}
Adding the two equalities in \eqref{eq:both_bind} gives
\[
C_1(Q,q^*)+C_2(Q)=R(p^*,Q).
\]
Recalling $C_1(Q,q)=C_S(Q)+kq$ and $C_2(Q)=C_R(Q)-f(Q)\pi(Q)$, the left-hand side is exactly
\[
C_S(Q)+C_R(Q)-f(Q)\pi(Q)+kq^*
= C(Q)+kq^*,
\]
so we obtain the integrated financial feasibility constraint
\begin{equation}\label{eq:agg_integrated}
C(Q)+kq^* = R(p^*,Q).
\end{equation}

\smallskip
\noindent\textbf{Step 3: The pricing problem reduces to the integrated benchmark.}
The deliverability constraint \eqref{eq:opt_problem_single_rs_a} is identical to that in the integrated formulation. Under $q^*>0$, \eqref{eq:agg_integrated} implies that maximizing $q$ is equivalent to maximizing $R(p,Q)$ over deliverable $p$, exactly as in the integrated benchmark. Hence the optimal price under revenue sharing coincides with the integrated optimal price:
\[
p^{\mathrm{RS}}(Q)=p^*(Q).
\]
Substituting $p^{\mathrm{RS}}(Q)=p^*(Q)$ into \eqref{eq:agg_integrated} yields the same optimal expansion as in the integrated benchmark:
\[
q^{\mathrm{RS}}(Q)=\frac{R(p^{\mathrm{RS}}(Q),Q)-C(Q)}{k}
=\frac{R(p^*(Q),Q)-C(Q)}{k}
=q^*(Q).
\]

\smallskip
\noindent\textbf{Step 4: Multi-period implications.}
Whenever $\gamma^*(Q)\in(0,1)$ holds along the realized capacity path, the per-period feasible set in $(p,q)$ under \eqref{eq:opt_problem_single_rs} coincides with that of the integrated benchmark because \eqref{eq:agg_integrated} is equivalent to the single integrated financial constraint and deliverability is unchanged. Therefore the induced one-step expansion map $\bar q(Q)$ and reachability map $S(Q)$ are identical under the two formulations along that path. All multi-period results that depend only on these per-period objects (including Theorems~\ref{thm:single_period_pstar}, \ref{thm:long_run_capacity}, and \ref{thm:myopic_clean}) carry over.
\end{proof}

\section{Appendix B: Simulation Setup}

\subsection{ISO-NE test system}

Our numerical study is conducted on a modified ISO New England (ISO-NE) test system. In general, ISO-NE is the regional transmission organization responsible for operating the electric grid and wholesale electricity markets across the six New England states. It provides a useful benchmark for simulation because it represents a transmission-constrained, multi-zone power system with geographically differentiated generation, demand, and congestion patterns. In market-design studies, ISO-NE-style test systems are often used to capture the interaction between network constraints, nodal or zonal prices, generator dispatch, and the integration of renewable and storage resources.

The specific environment adopted here follows the modified 8-zone ISO-NE test system used in \citet{qi_locational_2025}. In that setup, the system contains 8 nodes, 12 transmission lines, and 76 thermal generators, with a total installed conventional capacity of 23.1\,GW and an average load of 13\,GW. As emphasized by \citet{qi_locational_2025}, this is an agent-based market simulation environment with multiple renewable and storage units, intended as a transferable testbed rather than a result that depends uniquely on ISO-NE.

From a modeling perspective, the network is represented through a DC optimal power flow and economic dispatch structure. The dispatch includes system-wide power balance, transmission flow limits, generator operating limits, reserve requirements, ramping constraints, and storage charging/discharging and state-of-charge dynamics. In the notation of \citet{qi_locational_2025}, the core system consists of sets of conventional generators, storages, time periods, nodes, and lines, with nodal prices defined through the locational marginal price (LMP) expression induced by the dual variables of the dispatch problem.

\subsection{Baseline market-dispatch framework}

The simulation environment follows the market-dispatch framework developed for studying storage bidding and bid bounds under uncertainty. The framework contains three related dispatch layers.

First, it define an Oracle Economic Dispatch (OED) benchmark, which assumes perfect foresight of demand and renewable profiles over a multi-period horizon. This benchmark is not intended to be implementable in practice; rather, it serves as the ideal reference against which storage opportunity costs and truthful bids are conceptually defined.

Second, it introduce a Single-Period Economic Dispatch (SED) formulation for real-time operation. This is the operational dispatch used when storage submits charge and discharge bids in each time step. In this setting, the previous-period state of charge is treated as given, and the market clears storage and generator dispatch subject to power balance, network limits, and operating constraints.

Third, to model uncertainty explicitly, it develop a Chance-Constrained Economic Dispatch (CED). This reformulation incorporates net-load uncertainty through chance constraints, allowing the system operator to choose a confidence level and thereby internalize both uncertainty and risk preference into the resulting storage bid bounds. Their main theoretical result is that the storage opportunity value derived from the chance-constrained formulation provides a probabilistic upper bound on truthful storage marginal costs.

\begin{figure}[htbp]
    \centering
    
    \includegraphics[width=0.48\textwidth]{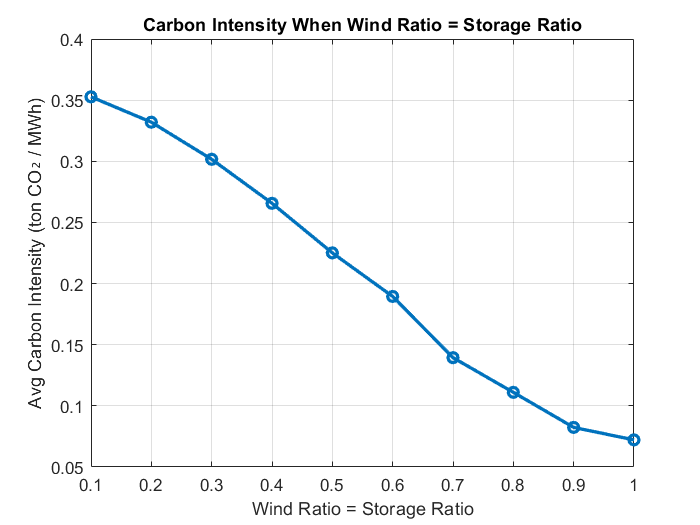}
    \hfill
    \includegraphics[width=0.48\textwidth]{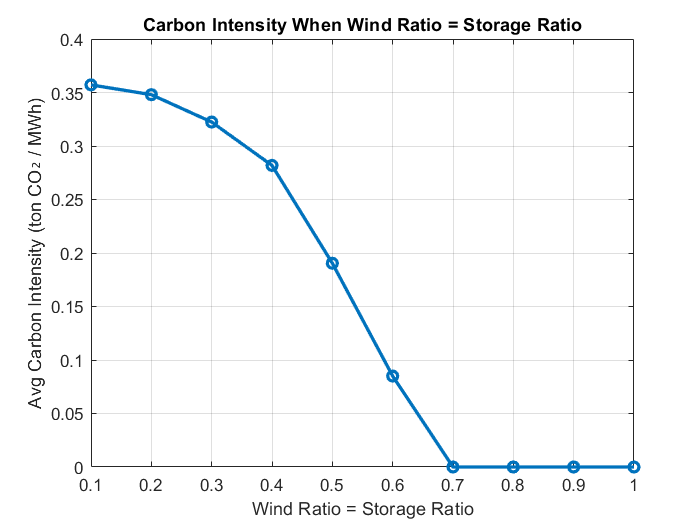}

    \vspace{0.5em}

    \includegraphics[width=0.48\textwidth]{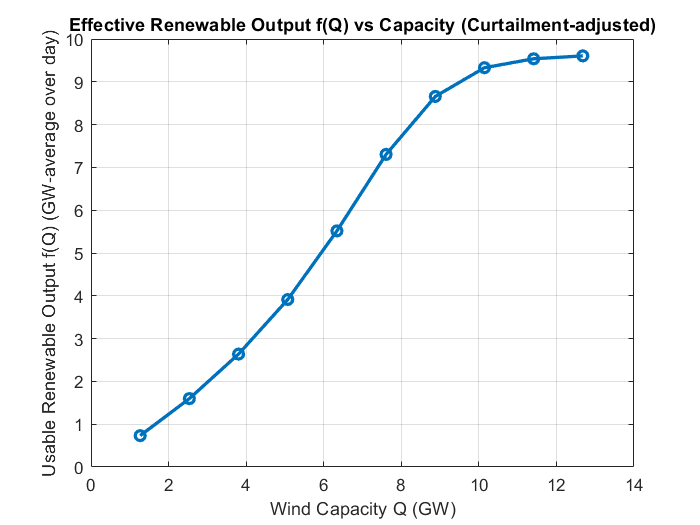}
    \hfill
    \includegraphics[width=0.48\textwidth]{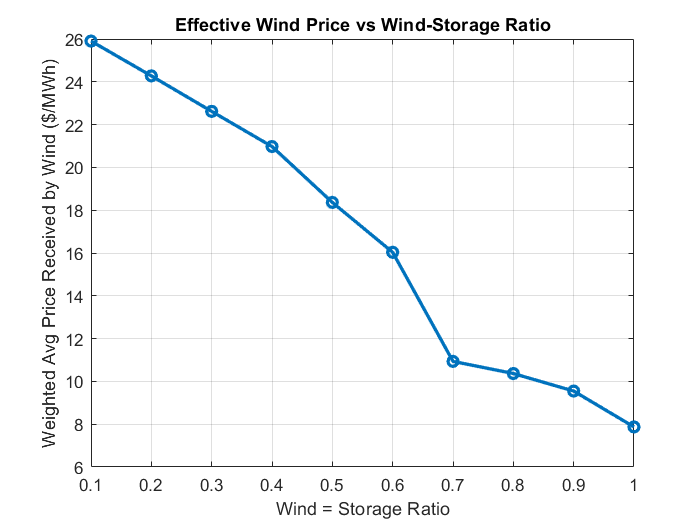}

    \caption{Grid-property visualization for the ISO-NE test system. Subfigure 1 shows the carbon intensity of the grid when operational prediction error and uncertainty are incorporated, while Subfigure 2 shows the corresponding relationship under perfect prediction for comparison. Subfigure 3 reports effective renewable output as a function of installed wind capacity, accounting for curtailment, and exhibits the broadly concave pattern assumed in the model. Subfigure 4 shows the wholesale market price received by renewable generation (wind) as wind capacity increases. This relationship is monotonically decreasing but neither globally convex nor concave, highlighting the nonlinear effects of uncertainty and market-clearing dynamics in the power system.}
    \label{fig:grid_properties_placeholder}
\end{figure}

\subsection{Grid Properties and Simulation Settings}

Figure~\ref{fig:grid_properties_placeholder} presents four summary plots, summarizing the key empirical relationships in the ISO-NE simulation environment that inform the functional structure assumed in our model.

\begin{table}[htbp]
\centering
\caption{Baseline parameters and cost-function specification.}
\label{tab:baseline_parameters}
\begin{tabularx}{\textwidth}{l l X}
\hline
\textbf{Parameter} & \textbf{Unit} & \textbf{Remarks} \\
\hline
$Q_0 = 0.5$ & GW & Initial renewable capacity in the baseline numerical simulation; sample initial condition. For reference, the current ISO-NE stage is approximately $5.0$ GW in the corresponding scaled interpretation. \\

$M = 10$ & GW & Market size, interpreted as the peak demand of the modeled utility. \\

$\epsilon = 0.0045$ & N/A & Demand sensitivity parameter used in the voluntary procurement specification. \\

$k = 1000$ & M\$/GW (or \$/kW) & Renewable installation cost parameter. Since $1000$ M\$/GW = $1000$ \$/kW, the two units are equivalent. \\

$\text{wind\_cf} = 0.35$ & N/A & Wind capacity factor 35\% used to convert installed wind capacity into effective energy production. \\

$\alpha_R = 0.015 \times 1400 = 21$ & M\$/GW$\cdot$yr & Baseline annual O\&M cost coefficient in the renewable cost function $c_R(Q)$. 15\% of previous renewable capacity, which has \$1400/kW, higher than future expansions. \\

$\beta_R = 5$ & M\$/GW$^2\cdot$yr & Convexity coefficient in the renewable cost function, capturing site-quality depletion as renewable capacity expands. \\

$c_R(Q) = \alpha_R Q + \beta_R Q^2$ & M\$/yr & Renewable operating cost function. Includes annual operating cost only, with increasing marginal cost as $Q$ rises. \\

$\alpha_S = 0.08 \times 120 = 9.6$ & M\$/GW$\cdot$yr & Effective annualized system cost coefficient in $c_S(Q)$, motivated by storage scaling one-to-one with wind capacity and assuming an 8\% annual payment on a battery cost of 120 M\$/GWh with 4-hour duration. \\

$\beta_S = 1$ & M\$/GW$^2\cdot$yr & Convexity coefficient in the system cost function, representing gradually harder system integration at higher renewable penetration. \\

$c_S(Q) = \alpha_S Q + \beta_S Q^2$ & M\$/yr & System cost function. Captures the effective annual cost of supporting renewable integration, including storage-related system costs. \\
\hline
\end{tabularx}
\end{table}

\end{document}